\theoremstyle{plain}
\newtheorem{thm}{Theorem}
  \theoremstyle{definition}
  \theoremstyle{remark}
  \newtheorem{rem}[thm]{Remark}
  \theoremstyle{plain}
  \newtheorem{prop}[thm]{Proposition}
  \theoremstyle{plain}
  \newtheorem{lem}[thm]{Lemma}
  \theoremstyle{plain}
  \newtheorem{cor}[thm]{Corollary}
 \theoremstyle{definition}
  \theoremstyle{remark}
  \newtheorem*{rem*}{Remark}
  \theoremstyle{definition}
\newtheorem*{question*}{\it{QUESTION}}
\theoremstyle{plain}
\newcommand{\N}{\mathbb{N}}
\newcommand{\R}{{\mathbb{R}}}
\newcommand{\C}{{\mathbb{C}}}
\newcommand{\Z}{{\mathbb{Z}}}
\newcommand{\Q}{{\mathbb{Q}}}
\newcommand{\D}{{\mathbb{D}}}
\newcommand{\T}{{\mathbb{T}}}
\newcommand{\dd}{{\rm d}}
\newcommand{\ii}{{\rm i}}
\newcommand{\diag}{\mathop\mathrm{diag}\nolimits}
\newcommand{\spn}{\mathop\mathrm{span}\nolimits}
\newcommand{\Dom}{\mathop\mathrm{Dom}\nolimits}
\newcommand{\Ran}{\mathop\mathrm{Ran}\nolimits}
\newcommand{\Ker}{\mathop\mathrm{Ker}\nolimits}
\newcommand{\sgn}{\mathop\mathrm{sgn}\nolimits}
\renewcommand{\Re}{\mathop\mathrm{Re}\nolimits}
\definecolor{DarkGreen}{rgb}{0,0.5,0.1} 
\definecolor{BrinkPink}{rgb}{0.98, 0.38, 0.5} 
\newcommand\soutD{\bgroup\markoverwith
{\textcolor{DarkGreen}{\rule[.5ex]{2pt}{1pt}}}\ULon}
\newcommand{\Hm}[1]{\leavevmode{\marginpar{\tiny%
$\hbox to 0mm{\hspace*{-0.5mm}$\leftarrow$\hss}%
\vcenter{\vrule depth 0.1mm height 0.1mm width \the\marginparwidth}%
\hbox to
0mm{\hss$\rightarrow$\hspace*{-0.5mm}}$\\\relax\raggedright #1}}}
\def\section{\@startsection{section}{1}%
  \z@{.7\linespacing\@plus\linespacing}{.5\linespacing}%
  {\normalfont\scshape\bf\centering}}
\newcommand{\diff}{\mathrm{d}}
\begin{document}

\title[Discrete Schr\"odinger operators on the half-line]{Spectral enclosures and stability for non-self-adjoint discrete Schr\"odinger operators on the half-line}

\author{David Krej{\v c}i{\v r}{\' i}k}
\address[David Krej{\v c}i{\v r}{\' i}k]{
Department of Mathematics, 
Faculty of Nuclear Sciences and Physical Engineering, 
Czech Technical University in Prague,
Trojanova 13, 12000 Prague~2, Czech Republic.
	}
\email{david.krejcirik@fjfi.cvut.cz}

\author{Ari Laptev}
\address[Ari Laptev]{
Department of Mathematics,
Imperial College London,Huxley Building, 180 Queen's Gate
London SW7 2AZ, UK, and Sirius Mathematics Center,
Sirius University of Science and Technology,
1 Olympic Ave, 354340, Sochi, Russia, 
	}
\email{a.laptev@imperial.ac.uk}

\author{Franti{\v s}ek {\v S}tampach}
\address[Franti{\v s}ek {\v S}tampach]{
Department of Mathematics, 
Faculty of Nuclear Sciences and Physical Engineering, 
Czech Technical University in Prague,
Trojanova 13, 12000 Prague~2, Czech Republic.
	}
\email{stampfra@fit.cvut.cz}

\subjclass[2010]{47A75, 34L15, 47B39}

\keywords{discrete Schr{\" o}dinger operator, spectral enclosure, spectral stability, Hardy inequality}

\date{November 14, 2021}

\begin{abstract}
We make a spectral analysis of 
discrete Schr{\" o}dinger operators on the half-line,
subject to complex Robin-type boundary couplings
and complex-valued potentials.
First, optimal spectral enclosures are obtained for summable potentials. 
Second, general smallness conditions on the potentials guaranteeing 
a spectral stability are established.
Third, a general identity which allows to generate optimal discrete Hardy inequalities for the discrete Dirichlet Laplacian on the half-line is proved. 
\end{abstract}

\maketitle

\section{Introduction}
A huge number of papers have been devoted to effects of 
additive perturbation~$V$ to a~given operator~$H_{0}$ to the spectrum. 
With traditional motivations rooted in mathematical physics, 
$H_{0}$ is a differential operator, 
such as the Schr\"odinger or Dirac operator,
and $V$ stands for a scalar or matrix operator of multiplication.
Typical questions of interests are: 
What is the location of the spectrum of the perturbed operator $H_{0}+V$, 
if $V$ belongs to a given class of potentials? 
How does the best possible spectral enclosure look like?
Are some components of the spectrum preserved
under suitable smallness or repulsive type 
conditions on the potentials?
Et cetera.

If~$H_{0}$ and~$V$ are self-adjoint, 
this type of problems have been intensively studied for more than a century,
due to the needs of quantum mechanics,
and the spectral properties are well understood.
On the other hand, the theory of non-self-adjoint~$V$ (or even~$H_0$)
is much less developed 
and the investigation is essentially restricted to the last two decades.
However, there are new motivations 
(including quantum mechanics \cite{Bagarello-book}),
which make the analysis highly expedient and fashionable.
It is also mathematically challenging because of the lack of tools
based on the spectral theorem.

The new non-self-adjoint era of the aforementioned type of problems
is certainly initiated by the highly influential work
of Abramov, Aslanyan, and Davies from 2001~\cite{abr-asl-dav_jpa01},
where the authors derived an optimal spectral enclosure for 
Schr{\"o}dinger operators on the line with complex-valued potentials. 
Ten years after, the optimal spectral bounds for the case of Schr{\" o}dinger operators on the half-line were deduced by Frank, Laptev, and Seiringer in~\cite{Frank-Laptev-Seiringer_2011}. 
Instead of giving an incomplete list of works with similar goals 
in various settings, we rather refer to recent papers 
\cite{Hansmann-Krejcirik,Mizutani-Schiavone}
with a fairly large collection of references on the subject.
 
The discrete analogue of the celebrated result~\cite{abr-asl-dav_jpa01} 
has been established only recently in~\cite{ibr-sta_ieot19}.
More specifically, the authors derived optimal spectral enclosures
for discrete Schr{\" o}dinger operators on $\Z$ 
with complex-valued potentials in sequence spaces.
The case of discrete Dirac operators on $\Z$ 
was investigated in~\cite{cas-ibr-kre-sta_ahp20}. 
Except for these two works, we are not aware of any paper
on discrete counterparts of the (comparatively many) 
differential settings studied in the last two decades. 

It is precisely the goal of the present paper to continue 
with the research project on spectral properties of
non-self-adjoint discrete operators initiated in 
\cite{ibr-sta_ieot19, cas-ibr-kre-sta_ahp20}.
Here we intend to present a discrete analogue 
of the continuous Schr\"odinger operators on the half-line
studied in~\cite{Frank-Laptev-Seiringer_2011}. 
More specifically, in Theorem~\ref{thm:spectral_enclosure_ell1_pot}, 
we deduce a spectral enclosure for the discrete Schr{\" o}dinger operator on $\N$,
subject to a complex Robin ``boundary condition''
and complex-valued $\ell^{1}$-potentials. 
The optimality is proven in Theorem~\ref{thm:optimality_ell_1_bound}. 
These results are presented in Section~\ref{sec:spec_enclosures}, 
while in Section~\ref{sec:prelim} we introduce the discrete Robin Laplacian on~$\N$
as the unperturbed operator
and analyze its spectral properties as a necessary preliminary.

In the rest of the present paper,
we go beyond the setting of~\cite{Frank-Laptev-Seiringer_2011} 
by looking for conditions on the potentials guaranteeing 
the spectral stability of the discrete Robin Schr{\" o}dinger operators.
Here we adopt the notion of spectral stability 
as used in \cite{Hansmann-Krejcirik}, 
meaning that the point, continuous and residual components 
of the spectra are preserved by the perturbations. 

In this course, discrete Hardy inequalities enter the game. Therefore Section~\ref{sec:hardy} is devoted to Hardy inequalities for the discrete Robin Laplacians on $\N$. As the main result of this section, we prove in Theorem~\ref{thm:generalized_discr_hardy} a general identity that allows to generate optimal discrete Hardy inequalities and, in addition, identifies the remainder term in the inequality. As a~concrete application, we obtain a one parameter family of optimal Hardy weights for the discrete Robin Laplacian on~$\N$ in Theorem~\ref{thm:hardy_ineq_aq}. Although our initial motivation stems from the exploration of the spectral stability, results on the discrete Hardy inequalities of Section~\ref{sec:hardy} are of independent interest.

In Theorems~\ref{thm:cond_empty_point_spec_complex} and~\ref{thm:cond_empty_discr_spec_complex} of Section~\ref{sec:spec_stability}, 
we establish general conditions on the potential guaranteeing the spectral stability of the discrete Robin Schr{\" o}dinger operator on $\N$. Here we restrict the otherwise complex Robin parameter to a real interval, 
making the Dirichlet and Neumann cases as two extreme cases.
(The continuous analogue which is also new is established
as Remark~\ref{Rem.continuous}.)
Finally, we combine these results with the discrete Hardy inequalities 
of Section~\ref{sec:hardy}
in order to deduce more explicit bounds on the potential implying the spectral stability in Theorem~\ref{thm:cond_from_hardy_point_spec}. 

The paper is concluded by Section~\ref{Sec.open},
in which we mention a challenging open problem,
and by Appendix with several illustrative and comparison plots.

\section{The discrete Robin Laplacians on $\N$ and their spectral properties}
\label{sec:prelim}

We start with several definitions to clarify what we mean by discrete analogues of Dirichlet, Neumann, and Robin Laplacians on $\N:=\{1,2,3,\dots\}$. 

First, the discrete differentiation is realized by the following backward and forward difference operators acting on $\ell^{2}(\N)$:
\begin{equation}
 (D\psi)_{n}:=\begin{cases} 
  					\psi_{n-1}\hskip-8pt&-\psi_{n}, \quad n>1,\\
  							  &-\psi_{1}, \quad n=1,
 				 \end{cases}
\quad\mbox{ and }\quad 				 
(D^{*}\psi)_{n}:=\psi_{n+1}-\psi_{n}, \quad n\geq1,
\label{eq:def_diff_op}
\end{equation}
for $\psi\in\ell^{2}(\N)$. 
We could have defined $(D\psi)_{n}:=\psi_{n-1}-\psi_{n}$
for every $n \geq 1$ with the convention that $\psi_0:=0$,
which is the realisation of the ``Dirichlet condition'',
while~$D^*$ satisfies no boundary condition, as in the continuous case.
Then, analogically to the continuous setting, operators $D^{*}D$ and $DD^{*}$ represent the discrete Dirichlet and Neumann Laplacian on $\N$, respectively; see the well-known commutation scenario in the continuous case~\cite[p.~263]{RS4}.

The discrete Dirichlet and Neumann Laplacians are closely related to
Jacobi operators $J_{0}$ and $J_{1}$ given by the tridiagonal matrices
\[
 J_{0}:=\begin{pmatrix}
  0 & 1 & 0 & 0 & \dots \\
  1 & 0 & 1 & 0 & \dots \\
  0 & 1 & 0 & 1 & \dots \\
  0 & 0 & 1 & 0 & \dots \\
  \vdots & \vdots & \vdots & \vdots & \ddots
 \end{pmatrix}
 \quad\mbox{ and }\quad 
 J_{1}:=\begin{pmatrix}
  1 & 1 & 0 & 0 & \dots \\
  1 & 0 & 1 & 0 & \dots \\
  0 & 1 & 0 & 1 & \dots \\
  0 & 0 & 1 & 0 & \dots \\
  \vdots & \vdots & \vdots & \vdots & \ddots
 \end{pmatrix}
\]
with respect to the standard basis 
$\{e_{n}\}_{n\in\N}$ of~$\ell^{2}(\N)$.
Indeed, one has
\[
-\Delta_0 := D^{*}D=2-J_{0} 
\quad\mbox{ and }\quad 
-\Delta_1 := DD^{*}=2-J_{1}.
\]
Since the relationship is through a mere constant shift
and a sign change, spectral properties of the Laplacians 
are encoded in $J_{0}$ and $J_{1}$.
Therefore,
with some abuse of terminology, we will refer to $J_{0}$ and $J_{1}$ as the discrete Dirichlet and Neumann Laplacians on $\N$, too. 

In a~grater generality, we can introduce the one-parameter family of Jacobi operators
\[
J_{a}:=\begin{pmatrix}
  a & 1 & 0 & 0 & \dots \\
  1 & 0 & 1 & 0 & \dots \\
  0 & 1 & 0 & 1 & \dots \\
  0 & 0 & 1 & 0 & \dots \\
  \vdots & \vdots & \vdots & \vdots & \ddots
 \end{pmatrix},
 \qquad 
 a\in\C,
\]
and, in analogy to the above relations, also operators
\begin{equation}
 -\Delta_{a}:=2-J_{a}.
\label{eq:def_delta_a}
\end{equation}
We refer to $-\Delta_{a}$ as well as $J_{a}$ as the discrete Robin Laplacian on $\N$ with the coupling constant $a\in\C$. Clearly, $-\Delta_{0}$ and  $-\Delta_{1}$ are the discrete Dirichlet and Neumann Laplacian on $\N$, respectively. 

While spectral properties of $J_{0}$ are well known, this seems to be not  the case for $J_{a}$ with general $a\in\C$. Therefore we summarize spectral properties of $J_{a}$ in the next theorem. To this end, it is useful to introduce the Joukowski transform
\begin{equation}\label{Joukowski}
 z=z(k):=k+k^{-1},
\end{equation}
where $k\in\C$, $0<|k|<1$. The Joukowski transform $k\mapsto z(k)$ is a one-to-one mapping from the punctured open unit disk $\D\setminus\{0\}$ onto the set $\C\setminus[-2,2]$ (the unit circle $\T$ is mapped twice onto $[-2,2]$). 

\begin{thm}\label{thm:spectrum_J_a}
For $a\in\C$, one has
\[
 \sigma_{\rm{c}}(J_{a})=[-2,2], \quad 
 \sigma_{\rm{r}}(J_{a})=\emptyset, 
 \quad\mbox{ and }\quad 
 \sigma_{\rm{p}}(J_{a})=\begin{cases}
  \emptyset&  \mbox{ if }\, |a|\leq 1,\\
  \{a+a^{-1}\}& \mbox{ if }\, |a|>1.\\
 \end{cases} 
\]
Moreover, if $|a|>1$, the only eigenvalue of $J_{a}$ is simple, i.e., of algebraic multiplicity~$1$.
Further, for $z=k+k^{-1}\notin\sigma(J_{a})$, where $0<|k|<1$, the Green kernel of $J_{a}$ reads
\begin{equation}
 (J_{a}-z)^{-1}_{m,n}=\frac{(k-a)k^{m+n-1}-(k^{-1}-a)k^{|n-m|+1}}{(1-ak)(k-k^{-1})}, \quad m,n\in\N.
\label{eq:green_kernel_Ja}
\end{equation}
\end{thm}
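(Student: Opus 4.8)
The plan is to exploit that $J_a=J_0+a\,\langle e_1,\cdot\rangle e_1$ is a rank-one, hence compact, perturbation of the bounded self-adjoint free operator $J_0$, whose spectrum is the purely continuous band $[-2,2]$. I would establish the two inclusions defining $\sigma(J_a)$ separately. For $[-2,2]\subseteq\sigma(J_a)$ it suffices to produce singular sequences: for $z=k+k^{-1}$ with $|k|=1$, i.e. $z\in[-2,2]$, the truncated waves $\psi^{(N)}_n:=N^{-1/2}k^{n}$ for $1\le n\le N$ (zero otherwise) are normalized and satisfy $\|(J_a-z)\psi^{(N)}\|\to0$, because $(J_a-z)\psi^{(N)}$ differs from the free difference expression only at the two endpoints $n=1$ and $n=N$, each contributing $O(1)$ to the squared norm against a mass of order $N$. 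This places the whole band in the approximate point spectrum, and hence in $\sigma(J_a)$.

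To locate the point spectrum I would solve $(J_a-z)\psi=0$ through the three-term recurrence it encodes. For $n\ge2$ the rows give the free recurrence $\psi_{n+1}-z\psi_n+\psi_{n-1}=0$, whose characteristic roots are the Joukowski pair $k,k^{-1}$ from \eqref{Joukowski}; a nontrivial $\ell^2$ solution must be proportional to the decaying branch $\psi_n=k^n$ with $0<|k|<1$. The first row imposes $(a-z)\psi_1+\psi_2=0$, that is $k^2=(z-a)k$, which after using $z=k+k^{-1}$ collapses to $ak=1$. Thus $k=a^{-1}$, admissible in the punctured disk precisely when $|a|>1$, in which case $z=a+a^{-1}\notin[-2,2]$ is an eigenvalue with eigenvector $(a^{-n})_{n}$, while no $\ell^2$ solution exists when $|a|\le1$. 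This yields the stated $\sigma_{\mathrm p}(J_a)$, with geometric simplicity built into the recurrence.

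The residual spectrum I would dispatch through the adjoint. The matrix of $J_a$ is complex-symmetric, so $J_a^*=J_{\bar a}$, and $z\in\sigma_{\mathrm r}(J_a)$ exactly when $z\notin\sigma_{\mathrm p}(J_a)$ while $\bar z\in\sigma_{\mathrm p}(J_a^*)=\sigma_{\mathrm p}(J_{\bar a})$. By the previous step the latter set is $\{\overline{a+a^{-1}}\}$ when $|a|>1$ and empty otherwise, and its conjugate $a+a^{-1}$ already lies in $\sigma_{\mathrm p}(J_a)$; hence $\sigma_{\mathrm r}(J_a)=\emptyset$ in every case. Together with the point-spectrum computation this forces the remaining spectrum to be continuous.

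Finally, to pin down $\sigma(J_a)\subseteq[-2,2]\cup\{a+a^{-1}\}$ and to produce \eqref{eq:green_kernel_Ja}, I would build the resolvent from the free one by the rank-one (Sherman--Morrison/Krein) identity $(J_a-z)^{-1}=(J_0-z)^{-1}-\dfrac{a\,(J_0-z)^{-1}e_1\,\langle e_1,(J_0-z)^{-1}\,\cdot\,\rangle}{1+a\,(J_0-z)^{-1}_{1,1}}$, inserting the standard half-line free kernel $(J_0-z)^{-1}_{m,n}\propto k^{m+n}-k^{|m-n|}$, the normalisation being fixed by the constancy of the Wronskian of the solutions $k^n$ and $\tfrac{k^n-k^{-n}}{k-k^{-1}}$. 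The scalar denominator $1+a\,(J_0-z)^{-1}_{1,1}$ is a nonzero multiple of $1-ak$, which vanishes on $\C\setminus[-2,2]$ only at $k=a^{-1}$, i.e. at $z=a+a^{-1}$ and only for $|a|>1$; elsewhere the combined kernel decays geometrically in both $m+n$ and $|m-n|$, so a Schur test shows the resolvent is bounded, giving the inclusion. Combining the two rank-one contributions over the common denominator $(1-ak)(k-k^{-1})$ produces the closed form \eqref{eq:green_kernel_Ja}, whose only singularity in $\C\setminus[-2,2]$ is a simple pole at $z=a+a^{-1}$ with a manifestly rank-one residue; the associated Riesz projection is therefore one-dimensional, proving algebraic simplicity. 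The main effort is this last assembly of the closed-form kernel, together with the non-self-adjoint bookkeeping that certifies $[-2,2]$ as purely continuous spectrum and correctly handles the borderline case $|a|=1$.
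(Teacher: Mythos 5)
Your proof is correct and reaches all the claims, but several of its components take a genuinely different route from the paper's. The overall skeleton coincides: you solve the three-term recurrence to locate $\sigma_{\mathrm{p}}$, use truncated generalized eigenfunctions as Weyl sequences to get $[-2,2]\subseteq\sigma(J_a)$ (the paper truncates the exact solution $\psi(k)$ of the full eigenvalue equation, you truncate the free wave $k^n$ and absorb the $O(1)$ boundary discrepancy at $n=1$; both work), and you dispatch $\sigma_{\mathrm{r}}$ via $J_a^*=J_{\bar a}$ exactly as the paper does. The differences are in the resolvent and the simplicity claim. The paper builds the Green kernel directly from the Wronskian of the boundary solution $\psi$ and the decaying solution $k^n$, and proves boundedness by splitting $G(z)$ into a Hilbert--Schmidt Hankel part plus a Toeplitz part whose symbol is summable; you instead obtain the kernel by Sherman--Morrison from the free half-line resolvent and bound it by a Schur test on the geometrically decaying kernel. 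Both are sound; your route makes the location of the pole $1+a(J_0-z)^{-1}_{1,1}=1-ak$ (note $(J_0-z)^{-1}_{1,1}=-k$ with the normalization forced by $(J_a-z)^{-1}\sim -z^{-1}$ at infinity) completely transparent, while the paper's Hankel--Toeplitz decomposition gives slightly more quantitative norm information that is reused later. For algebraic simplicity, the paper checks that the eigenvector of $J_a$ and that of $J_a^*=J_{\bar a}$ are not orthogonal, whereas you read off that the residue of the resolvent at $z=a+a^{-1}$ is a simple pole with rank-one residue, so the Riesz projection is one-dimensional; your argument is arguably cleaner once the closed-form kernel is in hand, but it does require verifying that the zero of $1-ak$ translates into a \emph{simple} zero in the variable $z$, which holds since $z-(a+a^{-1})=(k-a)(ak-1)/(ak)$ and $k=a^{-1}\neq a$ for $|a|>1$.
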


\begin{proof}
 First, we analyze the point spectrum of $J_{a}$. For $k\in\C$, $0<|k|\leq1$, we seek the solution $\psi=\psi(k)$ of the eigenvalue equation 
 \[
 J_{a}\psi=(k+k^{-1})\psi,
 \]
 which is determined uniquely up to a multiplicative constant. One readily verifies that the solution reads
 \begin{equation}
  \psi_n =
\begin{cases}
  \displaystyle
  \psi_{n}(k)=(k-a)k^{n-1}-(k^{-1}-a)k^{-n+1}
  & \mbox{if} \quad k\neq\pm1 \,,
  \\
  u_{n}(\pm1)=(\pm 1)^{n}\left(n\mp a(n-1)\right)
  & \mbox{if} \quad k=\pm1 \,.
\end{cases} 
 \label{eq:psi_eigenvec}
 \end{equation}
 The only non-trivial square summable solution is $\psi=\psi\left(a^{-1}\right)$ provided that $|a|>1$. It follows the assertion about the point spectrum
 $\sigma_{\rm{p}}(J_{a})$.
 
 Second, we show that the resolvent set of $J_{a}$ contains $\C\setminus[-2,2]$ if $|a|\leq1$, and $\C\setminus\left([-2,2]\cup\{a+a^{-1}\}\right)$ if $|a|>1$, i.e., $\C\setminus\left([-2,2]\cup\sigma_{\rm{p}}(J_{a})\right)\subset\rho(J_{a})$. The theory of Jacobi operators provides us with the general formula for the Green kernel
 \[
 G_{m,n}(z) = \frac{1}{W(\psi,\varphi)}\times\begin{cases}
  \varphi_{n}\psi_{m}&\quad\mbox{ if } m\leq n,\\
  \varphi_{m}\psi_{n}&\quad\mbox{ if } n<m,
 \end{cases}
\]
where $\psi$ is a solution of eigenvalue equation $J_{a}\psi=z\psi$ (which need not to belong to~$\ell^{2}(\N)$), $\varphi\in\ell^{2}(\N)$ and fulfills $\varphi_{n-1}+\varphi_{n+1}=z\varphi_{n}$ for all $n>1$, and the $n$-independent Wronskian $W(\psi,\varphi)$ is given by the formula
\[
 W(\psi,\varphi) = \psi_{n+1}\varphi_{n}-\psi_{n}\varphi_{n+1},
\]
which is non-vanishing if and only if $\psi$ and $\varphi$ are linearly independent. In our case, writing~\eqref{Joukowski}, 
the sequence $\psi$ is given by~\eqref{eq:psi_eigenvec} and $\varphi_{n}=k^{n}$, $n\in\N$. Then 
\[
W(\psi,\varphi)=(1-ak)(k-k^{-1})
\]
and 
\[
G_{m,n}(z)=\frac{(k-a)k^{m+n-1}-(k^{-1}-a)k^{n-m+1}}{(1-ak)(k-k^{-1})}, \quad\mbox{ for } m \leq n,
\]
and symmetrically in the case $m>n$. Notice that, for $0<|k|<1$, $W(\psi,\varphi)=0$ if and only if $k=a^{-1}$ and $|a|>1$, which corresponds to the eigenvalue $a+a^{-1}$. Hence $G_{m,n}$ is well defined for all $z\in\C\setminus[-2,2]$ if $|a|\leq 1$, and all $z\in\C\setminus\left([-2,2]\cup\{a+a^{-1}\}\right)$ if $|a|>1$.

Next, we verify that the operator $G(z)$ with matrix elements $G_{m,n}(z)$ is bounded whenever $0<|k|<1$ with the only exception of $k=a^{-1}$ in the case when $|a|>1$. Then, taking also into account that $G(z)(J_{a}-z)=(J_{a}-z)G(z)=I$, as one readily checks, we will know that $\C\setminus\left([-2,2]\cup\sigma_{\rm{p}}(J_{a})\right)\subset\rho(J_{a})$ and also formula~\eqref{eq:green_kernel_Ja} will be proven for all $z\in\C\setminus\left([-2,2]\cup\sigma_{\rm{p}}(J_{a})\right)$.

Notice that
\[
 G(z)=\alpha(k) H(k)+\beta(k)T(k),
\]
where $H(k)$ is the Hankel matrix with entries 
$H_{m,n}(k):=k^{m+n-2}$, $T(k)$ is the Toeplitz matrix with entries 
$T_{m,n}(k):=k^{|m-n|}$, for $m,n\in\N$, and $\alpha(k)$ and $\beta(k)$ are constants. We show that both $H(k)$ and $T(k)$ are bounded for all $k\in\D$ and hence also $G(z)$ is bounded for all $z\in\C\setminus\left([-2,2]\cup\sigma_{\rm{p}}(J_{a})\right)$. The Hankel operator $H(k)$ is actually Hilbert--Schmidt, since 

\[
 \|H(k)\|_{\rm{HS}}=\sqrt{\sum_{m,n=1}^{\infty}\left|H_{m,n}(k)\right|^{2}}=\sum_{n=0}^{\infty}|k|^{2n}=\frac{1}{1-|k|^{2}}<\infty.
\]
Using the general formula for the norm of a Toeplitz operator, see for example~\cite[\S~2.8]{bot-sil_06}, we get
\[
 \|T(k)\|=\max_{\theta\in[-\pi,\pi]}\left|\sum_{n=-\infty}^{\infty}k^{|n|}e^{\ii\theta}\,\right|\leq\sum_{n=-\infty}^{\infty}|k|^{|n|}=\frac{1+|k|}{1-|k|}<\infty.
\]

Third, we show that $[-2,2]\subset\sigma(J_{a})$. This follows from the Weyl theorem since
 \[
  \lim_{n\to\infty}\frac{\|(J_{a}-2\cos\phi)\psi^{(n)}\|}{\|\psi^{(n)}\|}=0,
 \]
 where $\psi^{(n)}:=\left(\psi_{1},\dots,\psi_{n},0,0,\dots\right)$ is truncated sequence~\eqref{eq:psi_eigenvec} with $k=e^{\ii\phi}$ and $\phi\in[-\pi,\pi]$. Indeed, the limit can be readily computed using equations 
 \[
 \|(J_{a}-2\cos\phi)\psi^{(n)}\|^{2}=|\psi_{n+1}|^{2} \quad\mbox{ and }\quad \|\psi^{(n)}\|^{2}=\sum_{j=1}^{n}|\psi_{j}|^{2}
 \]
together with the formulas from~\eqref{eq:psi_eigenvec}.

To conclude the assertion about the spectrum of $J_{a}$ and its parts, 
it is sufficient to verify that 
the residual spectrum is empty.
By definition, $z\in\sigma_{\rm{r}}(J_{a})$ if and only if $z\notin\sigma_{\rm{p}}(J_{a})$ and $\Ran(J_{a}-z)^{\perp}=\Ker(J_{a}-z)^{*}$ is non-trivial. Since $J_{a}^{*}=J_{\overline{a}}$ the letter is equivalent to $z\in\sigma_{\rm{p}}(J_{a})$. Hence $\sigma_{\rm{r}}(J_{a})=\left(\C\setminus\sigma_{\rm{p}}(J_{a})\right)\cap\sigma_{\rm{p}}(J_{a})=\emptyset$.

In summary, the spectrum of~$J_a$ is purely continuous 
and given by the interval $\sigma_\mathrm{c}(J_a)=[-2,2]$, 
except for the existence of the unique discrete eigenvalue 
$\lambda := a+a^{-1}$ if $|a| > 1$.
It remains to verify the simplicity of the eigenvalue. To this end, it suffices to check that the eigenvector $\psi(a^{-1})$ corresponding to the eigenvalue $\lambda$ of $J_{a}$ and the eigenvector $\overline{\psi(a^{-1})}$ corresponding to the eigenvalue $\overline{\lambda}$ of the adjoint $J_{a}^{*}=J_{\overline{a}}$ are not orthogonal. This is straightforward since, by using~\eqref{eq:psi_eigenvec}, we obtain
 \[
  \left\langle \overline{\psi(a^{-1})}, \psi(a^{-1}) \right\rangle=\sum_{n=1}^{\infty}\psi_{n}^{2}(a^{-1})=\left(a^{-1}-a\right)^{2}\,\sum_{n=1}^{\infty}a^{-2n+2}=a^{2}-1\neq0.
 \] 
It completes the proof of the theorem.
\end{proof}

Lastly, we mention a duality between $J_{a}$ and $J_{-a}$ which allows to restrict some parts of the forthcoming analysis to $a\geq0$ without loss of generality. The proof is straightforward.

\begin{prop}\label{prop:unitary}
We have the equations
\[
 UJ_{-a}U^{*}=-J_{-a} \quad\mbox{ and }\quad U(-\Delta_{a})U^{*}=4+\Delta_{-a},
\]
for the unitary operator $U:=\diag(1,-1,1,-1,1,\dots)$ and all $a\in\C$.
\end{prop}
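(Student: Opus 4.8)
The plan is to reduce both asserted identities to a single entrywise computation of the conjugation $U J_a U^{*}$. Since $U=\diag(1,-1,1,-1,\dots)$ is a real diagonal matrix with $U_{nn}=(-1)^{n-1}$, it is simultaneously self-adjoint and an involution, so $U^{*}=U$ and $UU^{*}=U^{2}=I$; in particular $U$ is unitary, as claimed. Conjugation by a diagonal matrix acts entrywise, so for any (bounded) matrix $M$ one has $(UMU^{*})_{m,n}=U_{mm}U_{nn}M_{m,n}=(-1)^{m+n}M_{m,n}$, and it only remains to track how this sign factor acts on the nonzero entries of $J_a$.

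Applying this with $M=J_a$, I would observe that the factor $(-1)^{m+n}$ equals $+1$ exactly on the main diagonal and equals $-1$ on the two off-diagonals $n=m\pm1$ which carry the hopping entries of $J_a$. Hence the corner entry $(J_a)_{1,1}=a$ is left unchanged, while every nearest-neighbour entry $1$ is multiplied by $-1$. Reading off the resulting tridiagonal matrix — corner entry $a$, sub- and super-diagonals all equal to $-1$ — and comparing with the definition of $J_{-a}$, I would conclude $U J_a U^{*}=-J_{-a}$, which is the first asserted identity.

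For the second identity I would simply substitute the definition $-\Delta_a=2-J_a$ from~\eqref{eq:def_delta_a}: using $UU^{*}=I$ together with the relation just proved,
\[
U(-\Delta_a)U^{*}=2\,UU^{*}-U J_a U^{*}=2+J_{-a}=4-(2-J_{-a})=4+\Delta_{-a},
\]
where the last equality again invokes~\eqref{eq:def_delta_a} with $a$ replaced by $-a$.

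I do not expect any genuine obstacle here: the whole argument is finite sign bookkeeping once the entrywise action of the diagonal conjugation has been made explicit, which is consistent with the remark that the proof is straightforward. The only point deserving a little care is that the unique fixed point of the sign flip $(-1)^{m+n}$ among the nonzero entries of $J_a$ is the corner $(1,1)$, so that precisely the Robin coupling $a$ survives unchanged while all the hopping terms reverse sign; this is exactly what turns $J_a$ into $-J_{-a}$ rather than $-J_a$, and hence what produces the shift $J_{-a}\mapsto 4+\Delta_{-a}$ at the level of the Laplacians.
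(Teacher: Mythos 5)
Your proof is correct and is exactly the straightforward entrywise sign computation that the paper alludes to without writing out. One small point worth flagging: what you actually establish is $UJ_{a}U^{*}=-J_{-a}$, whereas the proposition as printed reads $UJ_{-a}U^{*}=-J_{-a}$; the latter fails for $a\neq 0$ (the corner entry would have to change sign), so the paper's first displayed identity contains a typo and your version is the correct one --- it is also the only version consistent with the second identity $U(-\Delta_{a})U^{*}=4+\Delta_{-a}$ and with the way the proposition is later invoked in Remark~\ref{rem:stability_negative_a}.
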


\section{Optimal spectral enclosures for $J_{a}$ perturbed by complex $\ell^{1}$-potentials}
\label{sec:spec_enclosures}

The goal of this section is to deduce optimal spectral bounds for 
the spectrum of the shifted operator $J_{a}+V$, 
where $V:=\diag(v_{1},v_{2},\dots)$ is a diagonal matrix 
with complex sequence $v=\{v_{n}\}_{n=1}^{\infty}\in\ell^{1}(\N)$. We refer to the sequence~$v$ as well as the operator~$V$ as the potential.

For $z=k+k^{-1}$, where $|k|\leq 1$, we define the function
\begin{equation}
 g_{a}(z):=\sup_{n\in\N}\left|1-\frac{k-a}{1-ak}\,k^{2n-1}\right|.
\label{eq:def_g_a}
\end{equation}
If $|a|>1$, then $g_{a}\left(a+a^{-1}\right)=\infty$.

\begin{thm}\label{thm:spectral_enclosure_ell1_pot}
 Let $v\in\ell^{1}(\N_{0})$ and $a\in\C$. Then
 \begin{equation}\label{eq:ell1_bound_optimal}
  \sigma_\mathrm{p}(J_{a}+V)\subset\{z\in\C\setminus(-2,2) 
  \mid \sqrt{|z^{2}-4|}\leq g_{a}(z)\,\|v\|_{\ell^{1}}\}.
 \end{equation}
\end{thm}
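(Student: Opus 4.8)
The plan is to combine the explicit Green kernel from Theorem~\ref{thm:spectrum_J_a} with a Birman--Schwinger reduction, the arising function $g_a$ being essentially the modulus of the diagonal resolvent entries. Factorize the potential as $V=CB$ with the diagonal matrices $B:=|V|^{1/2}$ and $C:=\sgn(v)\,|V|^{1/2}$, where $\sgn(v_n):=v_n/|v_n|$ for $v_n\neq0$ and $0$ otherwise, so that $CB=V$. Fix $z\in\sigma_{\rm p}(J_a+V)$. If $z=a+a^{-1}$ (possible only for $|a|>1$) then $g_a(z)=\infty$ and $z$ lies in the enclosure trivially, so assume $z\neq a+a^{-1}$; if moreover $z\notin[-2,2]$, then $z\in\rho(J_a)$ by Theorem~\ref{thm:spectrum_J_a}. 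Let $\psi\neq0$ solve $(J_a+V)\psi=z\psi$ and set $\phi:=B\psi$. Were $\phi=0$, then $V\psi=C\phi=0$ and $(J_a-z)\psi=0$, forcing $z\in\sigma_{\rm p}(J_a)$, a contradiction; hence $\phi\neq0$. Applying $(J_a-z)^{-1}$ and then $B$ to $(J_a-z)\psi=-CB\psi$ gives $\phi=-B(J_a-z)^{-1}C\,\phi$, so the Birman--Schwinger operator $K(z):=B(J_a-z)^{-1}C$ has eigenvalue $-1$ and $\|K(z)\|\geq1$. It therefore suffices to prove $\|K(z)\|\leq g_a(z)\,\|v\|_{\ell^1}/\sqrt{|z^2-4|}$.

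The next step is to recast the Green kernel~\eqref{eq:green_kernel_Ja} in a form exhibiting $g_a$. Using $(k^{-1}-a)k^{|n-m|+1}=(1-ak)k^{|n-m|}$ and abbreviating $r:=(k-a)/(1-ak)$, one rewrites
\begin{equation*}
 (J_a-z)^{-1}_{m,n}=\frac{1}{k-k^{-1}}\left(r\,k^{m+n-1}-k^{|m-n|}\right)
 =\frac{-k^{|m-n|}}{k-k^{-1}}\left(1-r\,k^{2\min(m,n)-1}\right),
\end{equation*}
the second equality following by factoring out $k^{|m-n|}$ (checking the cases $m\leq n$ and $m>n$ separately). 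Since $z^2-4=(k-k^{-1})^2$ gives $|k-k^{-1}|=\sqrt{|z^2-4|}$, and since $|1-r\,k^{2\min(m,n)-1}|\leq g_a(z)$ by the very definition~\eqref{eq:def_g_a}, we obtain
\begin{equation*}
 \bigl|(J_a-z)^{-1}_{m,n}\bigr|\leq\frac{g_a(z)}{\sqrt{|z^2-4|}}\,|k|^{|m-n|},\qquad m,n\in\N.
\end{equation*}
To estimate the norm, note the entries of $K(z)$ obey $|K(z)_{m,n}|\leq\frac{g_a(z)}{\sqrt{|z^2-4|}}|v_m|^{1/2}|k|^{|m-n|}|v_n|^{1/2}\leq\frac{g_a(z)}{\sqrt{|z^2-4|}}|v_m|^{1/2}|v_n|^{1/2}$, where the last step uses $|k|<1$. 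The dominating kernel $S_{m,n}:=|v_m|^{1/2}|v_n|^{1/2}$ is the rank-one operator $|w\rangle\langle w|$ with $w_n=|v_n|^{1/2}$, of norm $\|w\|^2=\|v\|_{\ell^1}$. As $0\leq|K(z)_{m,n}|\leq\frac{g_a(z)}{\sqrt{|z^2-4|}}S_{m,n}$ entrywise and $S$ has nonnegative entries, the elementary bound $|\langle K(z)f,g\rangle|\leq\langle S|f|,|g|\rangle\leq\|S\|\,\|f\|\,\|g\|$ yields $\|K(z)\|\leq\frac{g_a(z)}{\sqrt{|z^2-4|}}\,\|v\|_{\ell^1}$. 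Combined with $\|K(z)\|\geq1$, this gives $\sqrt{|z^2-4|}\leq g_a(z)\,\|v\|_{\ell^1}$ for every eigenvalue $z\notin[-2,2]$; the endpoints $z=\pm2$ satisfy the bound trivially since there $\sqrt{|z^2-4|}=0$.

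It remains to exclude eigenvalues in the open interval $(-2,2)$, which the Birman--Schwinger argument cannot reach because $(J_a-z)^{-1}$ does not exist there; this I expect to be the main obstacle. The plan is to analyze the asymptotics of a putative $\ell^2$ eigenfunction: for $z=2\cos\phi\in(-2,2)$ the recurrence $\psi_{n+1}+\psi_{n-1}=(z-v_n)\psi_n$ is, for large $n$, an $\ell^1$-perturbation of the free equation, whose fundamental solutions are the bounded oscillations $e^{\pm\ii n\phi}$. A discrete variation-of-constants (Levinson-type) estimate then shows that every solution is asymptotically a combination of $e^{\pm\ii n\phi}$, hence of constant-order modulus and not square summable unless it vanishes identically, so no embedded eigenvalue can occur. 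Together with the degenerate value $z=a+a^{-1}$, which is enclosed trivially via $g_a(z)=\infty$, this exhausts $\sigma_{\rm p}(J_a+V)$ and completes the argument.
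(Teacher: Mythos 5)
Your argument is correct and follows essentially the same route as the paper: a Birman--Schwinger reduction with the factorization $V=|V|^{1/2}\cdot\sgn(v)|V|^{1/2}$, a pointwise bound on the Green kernel \eqref{eq:green_kernel_Ja} yielding exactly $g_a(z)/\sqrt{|z^2-4|}$, a rank-one (equivalently Cauchy--Schwarz) domination giving the factor $\|v\|_{\ell^1}$, and the exclusion of embedded eigenvalues in $(-2,2)$ via the asymptotics of Jost-type solutions for $\ell^1$ perturbations. The only cosmetic differences are that you derive the needed direction of the Birman--Schwinger principle directly rather than citing it, and you sketch the Levinson-type argument for $(-2,2)$ at the same level of detail at which the paper defers it to the literature.
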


\begin{proof}
It follows from the Birman--Schwinger principle, cf.~\cite[Corol.~4]{Hansmann-Krejcirik},
that for any $z\in\C$ (including $z\in \sigma_\mathrm{c}(J_a)=[-2,2]$), 
we have the implication
\[
 \|K(z)\|<1\quad\Rightarrow\quad z\notin\sigma_\mathrm{p}(J_{a}+V),
\]
where $K(z):=|V|^{1/2}(J_{0}-z)^{-1}|V|^{1/2}\sgn V$ is the Birman--Schwinger operator.

With the aid of the Cauchy--Schwarz inequality, we may estimate
 \begin{align*}
  \|K(z)u\|^{2}&\leq\sum_{m=0}^{\infty}\left(\sum_{n=0}^{\infty}\sqrt{|v_{m}|}\left|\left(J_{a}-z\right)_{m,n}^{-1}\right|\sqrt{|v_{n}|}|u_{n}|\right)^{2}\leq\gamma_{a}^{2}(z)\sum_{m=0}^{\infty}|v_{m}|\left(\sum_{n=0}^{\infty}\sqrt{|v_{n}|}|u_{n}|\right)^{\!2}\\
  &\leq\gamma_{a}^{2}(z)\|v\|_{\ell^{1}}^{2}\|u\|^{2},
 \end{align*}
 for any $u\in\ell^{2}(\N)$, where
 \[
  \gamma_{a}(z):=\sup_{m,n\in\N}\left|\left(J_{a}-z\right)^{-1}_{m,n}\right|.
 \]
 Note that the thresholds of the continuous spectrum $\pm2$, 
 as well as the discrete eigenvalue $a+a^{-1}$ if $|a|>1$, 
 are always included in the set on the right-hand side of~\eqref{eq:ell1_bound_optimal}. Next, using~\eqref{eq:green_kernel_Ja} for $z=k+k^{-1}$ with $|k|\leq1$, where $k\neq\pm1$ and $k\neq a^{-1}$ if $|a|>1$, we obtain
 \begin{align*}
  \gamma_{a}(z)&=\sup_{\substack{n,m\in\N \\ m\leq n}}|k|^{n-m}\left|\frac{1-ak-(k-a)k^{2m-1}}{(1-ak)(k^{-1}-k)}\right|=\frac{1}{|k^{-1}-k|}\,\sup_{m\in\N}\left|1-\frac{k-a}{1-ak}\,k^{2m-1}\right|\\
  &=\frac{g_{a}(z)}{\sqrt{z^{2}-4}}.
 \end{align*}
 In total, we see that
 \[ 
 \|K(z)\|\leq\gamma_{a}(z)\|v\|_{\ell^{1}}=\frac{g_{a}(z)}{\sqrt{z^{2}-4}}\|v\|_{\ell^{1}},
 \]
 for $z\neq\pm2$, and hence
 \[
  \sigma_\mathrm{p}(J_{a}+V)\subset\{z\in\C \mid \sqrt{|z^{2}-4|}\leq g_{a}(z)\,\|v\|_{\ell^{1}}\}.
 \]

Moreover, we can remove the interval $(-2,2)$ from the spectral enclosure since it follows from the assumption $v\in\ell^{1}(\N)$ that $\sigma_\mathrm{p}(J_{a}+V)\cap(-2,2)=\emptyset$. This is a well known result based on an asymptotic behavior of the so-called Jost solutions of $J_{0}+V$, see for example \cite[Sec.~13.6]{Simon_OPUC2}.
More specifically, the reference deals with
real $v\in\ell^{1}(\N)$ only, 
however, the reality of $v$ is not essential for the proof and the same assertion holds true for complex $v\in\ell^{1}(\N)$, too (we are not aware of an exact reference for this results covering complex potentials).
\end{proof}

To prove the optimality of~\eqref{eq:ell1_bound_optimal} we will need the following auxiliary result.

\begin{lem}\label{lem:optimality_aux}
For any $\kappa\in\C$ and $k\in\C\setminus\R$, there exists $n\in\N$ such that $|1+\kappa k^{2n}|\geq 1$.
\end{lem}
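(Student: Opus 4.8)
The plan is to argue by contradiction. Suppose that for some $\kappa\in\C$ and some $k\in\C\setminus\R$ we had $|1+\kappa k^{2n}|<1$ for every $n\in\N$. The strategy is to show that this forces too strong a constraint, incompatible with $k$ being non-real.

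First I would split into two regimes according to $|k|$. If $|k|\geq 1$, then the terms $\kappa k^{2n}$ do not decay, and for the inequality $|1+\kappa k^{2n}|<1$ to hold for all $n$ one needs $\kappa k^{2n}$ to stay inside the open disk of radius $1$ centered at $-1$ for all $n$; since $|\kappa k^{2n}| = |\kappa|\,|k|^{2n}$ is nondecreasing and bounded (being confined to a bounded disk), we must have $|k|=1$, and then the points $\{\kappa k^{2n}\}_{n}$ all lie on a circle of fixed radius $|\kappa|$. The geometric content is that a full circle of radius $r>0$ centered at the origin cannot lie entirely in the open disk $\{|w+1|<1\}$, because that disk touches the origin and has diameter $2$ along the real axis; so the orbit $\kappa k^{2n}$ cannot be confined there unless it fails to be genuinely spread around the circle. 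Since $k\notin\R$ forces $k^2\neq 1$, and when $|k|=1$ the multiplier $k^2=e^{2\ii\theta}$ with $\theta\notin\{0,\pi\}$, the orbit $\{\kappa e^{2\ii n\theta}\}$ is either dense in the circle (irrational $\theta/\pi$) or cycles through several distinct points (rational $\theta/\pi$); in either case it must leave the open disk, giving the contradiction.

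The remaining regime is $0<|k|<1$. Here $\kappa k^{2n}\to 0$, so $|1+\kappa k^{2n}|\to 1$ from whichever side, and the strict inequality for all $n$ is now a delicate small-perturbation statement rather than an obstruction from boundedness. In this case I would linearize: write $|1+\kappa k^{2n}|^2 = 1 + 2\Re(\kappa k^{2n}) + |\kappa|^2|k|^{4n}$, so that $|1+\kappa k^{2n}|<1$ is equivalent to $2\Re(\kappa k^{2n}) + |\kappa|^2|k|^{4n}<0$. For large $n$ the quadratic term is negligible against the linear one, so the condition reads essentially $\Re(\kappa k^{2n})<0$ for all large $n$. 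Writing $\kappa k^{2}=\rho e^{\ii\alpha}$ with the argument $\alpha=2\Arg k + \Arg\kappa$, the requirement becomes $\cos(\alpha + 2(n-1)\beta)<0$ eventually, where $\beta:=\Arg(k^2)=2\Arg k$. Because $k\notin\R$, we have $\beta\notin\pi\Z$, so the angles $\alpha+2n\beta$ rotate by a fixed nonzero increment and cannot remain in the left half-plane $\{\Re<0\}$ for all large $n$: a sequence advancing by a fixed angle not a multiple of $\pi$ must eventually land in the right half-plane. This yields an $n$ with $\Re(\kappa k^{2n})>0$, hence $|1+\kappa k^{2n}|>1\geq 1$, contradicting the assumption and proving the lemma.

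The main obstacle I expect is the regime $0<|k|<1$, precisely because the decay $k^{2n}\to 0$ makes the bound $|1+\kappa k^{2n}|\geq 1$ asymptotically tight rather than slack, so one cannot win by crude size estimates and must extract the sign of $\Re(\kappa k^{2n})$ from the rotation of the argument. The clean way to control this is to note that $\beta=2\Arg k\notin\pi\Z$ whenever $k\notin\R$, so the phases $\{2n\beta \bmod 2\pi\}$ are not eventually trapped in any closed half-circle; making that ``not trapped in a half-plane'' statement rigorous (via equidistribution in the irrational case and an explicit finite cycle in the rational case) is the technical heart, while the $|k|\geq 1$ case is comparatively immediate.
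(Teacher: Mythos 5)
Your core mechanism is the same as the paper's: everything reduces to producing an $n$ with $\Re(\kappa k^{2n})\geq 0$, which follows because the arguments $\arg\kappa+2n\arg k$ form either a dense subset of the circle (irrational $\arg k/\pi$) or the vertex set of a regular $p$-gon with $p\geq2$ (rational case), and neither set can avoid the closed right half-plane. The paper gets there directly and without any case split on $|k|$: since only the sign of $\Re(\kappa k^{2n})$ matters, the modulus of $k$ is irrelevant, so your separate treatment of $|k|\geq1$ versus $|k|<1$, and the remark that the quadratic term is ``negligible for large $n$'', are superfluous --- note that $|1+w|<1$ with $w\neq0$ already implies $\Re w<0$ exactly, not merely asymptotically.

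There is, however, one concrete false claim: you assert that $\beta=2\Arg k\notin\pi\Z$ whenever $k\notin\R$. This fails for purely imaginary $k$: if $k=\pm\ii r$ with $r>0$, then $\Arg k=\pm\pi/2$ and $\beta=\pm\pi$. Your justification (``a sequence advancing by a fixed angle not a multiple of $\pi$ must eventually land in the right half-plane'') therefore does not cover this case; likewise, your claim that the phases are ``not eventually trapped in any closed half-circle'' is literally false when $\beta=\pi$ and the initial phase sits on the imaginary axis. What you actually need, and what $k\notin\R$ actually gives, is only that $k^2\neq1$, i.e.\ the rotation increment is nonzero modulo $2\pi$; then the orbit is dense or a regular $p$-gon with $p\geq2$ whose vertices sum to zero, so they cannot all have strictly negative real part --- in particular the antipodal pair arising when $\beta=\pm\pi$ is handled at once. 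With that one-line repair (escaping the \emph{open} left half-plane is all that is required), your proof is complete and coincides in substance with the paper's.
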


\begin{proof}
 Let $\kappa\in\C$ and $k\in\C\setminus\R$ be fixed. The claim holds trivially for $\kappa=0$, therefore we further assume $\kappa\neq0$. Then $|1+\kappa k^{2n}|\geq 1$ if $\Re(\kappa k^{2n})\geq0$ for some $n\in\N$. This is true if there exists $n\in\N$ such that
 \[
  \Re e^{\ii\phi+2n\ii\theta}\geq0,
 \]
 where $\phi:=\arg\kappa$ and $\theta:=\arg k$. 
 
If $\theta\in\pi(\R\setminus\Q)$, then the set $\{e^{\ii\phi+2n\ii\theta} \mid n\in\N\}$ is dense in the unit circle and hence the claim is obviously true. Next, suppose $\theta\in\pi\Q$. Notice that $\theta\notin\pi\Z$ since $k\notin\R$ by assumptions. This means that the points from $\{e^{\ii\phi+2n\ii\theta} \mid n\in\N\}$ are vertices of a regular $p$-gon with $p\geq2$. At least one of these vertices has to be located in the half-plane $\Re z\geq0$.
\end{proof}

\begin{thm}\label{thm:optimality_ell_1_bound}
The spectral bound~\eqref{eq:ell1_bound_optimal} is optimal in the following sense: given $Q>0$ and $z\in\C\setminus\R$ such that $\sqrt{|z^{2}-4|}=g_{a}(z)Q$ there exist $n\in\N$ and $\omega\in\C$ with $|\omega|=Q$ such that $z$ is the only eigenvalue of $J_{a}+\omega P_{n}$, where $P_{n}=\langle\,\cdot\,,e_{n}\rangle e_{n}$.
\end{thm}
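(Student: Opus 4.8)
The plan is to exploit that the perturbation $\omega P_n$ has rank one, so the whole problem collapses to a single scalar equation. First I would invoke the Birman--Schwinger principle in its rank-one (Krein resolvent) form, exactly as in the proof of Theorem~\ref{thm:spectral_enclosure_ell1_pot}: for $w\notin\sigma(J_a)$ the point $w$ is an eigenvalue of $J_a+\omega P_n$ if and only if $1+\omega\,(J_a-w)^{-1}_{n,n}=0$. Writing $w=z(k')$ through the Joukowski map \eqref{Joukowski} and reading the diagonal Green kernel off \eqref{eq:green_kernel_Ja}, one finds $(J_a-w)^{-1}_{n,n}=-\frac{1}{k'-k'^{-1}}\bigl(1+\kappa(k')\,k'^{2n}\bigr)$ with $\kappa(k'):=-\frac{k'-a}{k'(1-ak')}$, which makes the eigenvalue condition completely explicit. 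In particular $z=z(k)$ is an eigenvalue precisely when $\omega=-1/(J_a-z)^{-1}_{n,n}$, and then
\[
 |\omega|=\frac{\sqrt{|z^{2}-4|}}{\bigl|1+\kappa(k)k^{2n}\bigr|}=\frac{\sqrt{|z^{2}-4|}}{\bigl|1-\frac{k-a}{1-ak}k^{2n-1}\bigr|},
\]
the modulus in the denominator being exactly the quantity supremised in the definition \eqref{eq:def_g_a} of $g_a$.

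The second step is to fix $n$. Since $z\in\C\setminus\R$ forces $k\notin\R$ and $z\notin[-2,2]$, we have $0<|k|<1$, so $|1+\kappa(k)k^{2m}|\to1$ as $m\to\infty$; by Lemma~\ref{lem:optimality_aux} at least one term is $\ge1$, hence $g_a(z)\ge1$ and the supremum in \eqref{eq:def_g_a} is attained at some finite $n$ (if $g_a(z)>1$ only finitely many terms can lie near $g_a(z)$, while if $g_a(z)=1$ the index produced by the Lemma already realises it). Fixing such an $n$ and setting $\omega:=-1/(J_a-z)^{-1}_{n,n}$, the assumption $\sqrt{|z^{2}-4|}=g_a(z)Q$ gives $|\omega|=\sqrt{|z^{2}-4|}/g_a(z)=Q$, and by construction $z$ is an eigenvalue of $J_a+\omega P_n$.

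It remains to prove that $z$ is the \emph{only} eigenvalue, and this is where the real work lies. Any eigenvalue $w=z(k')$ with $0<|k'|<1$ must satisfy $(J_a-w)^{-1}_{n,n}=(J_a-z)^{-1}_{n,n}$. I would clear denominators in this identity using \eqref{eq:green_kernel_Ja}, noting that the factor $k'^{2}-1$ (which records the band edges $w=\pm2$) cancels, reducing the condition to a polynomial equation $p_n(k')=0$ of degree $2n-1$ having $k$ among its roots. Since the Joukowski map is a bijection of $\D\setminus\{0\}$ onto $\C\setminus[-2,2]$, the eigenvalues of $J_a+\omega P_n$ lying off $[-2,2]$ are in bijection with the roots of $p_n$ in the open disk $\D$; the endpoints $w=\pm2$ correspond to $|k'|=1$ and are not $\ell^2$-eigenvalues, while $(-2,2)$ is excluded by the Jost-function argument already used in Theorem~\ref{thm:spectral_enclosure_ell1_pot}. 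Thus the statement reduces to showing that $k$ is the only root of $p_n$ inside $\D$. For $n=1$ this is immediate, since $J_a+\omega P_1=J_{a+\omega}$ and Theorem~\ref{thm:spectrum_J_a} allows at most one eigenvalue.

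The main obstacle is the root localisation for general $n$: one must show that the remaining $2n-2$ zeros of $p_n$ lie outside $\overline{\D}$. I expect the extremality of the chosen $n$ to be essential here, not merely convenient. Indeed, the product of all $2n-1$ roots of $p_n$ has modulus $1/|\omega|=1/Q$, so that minimising $|\omega|=Q$ over the admissible sites---which is exactly what selecting an $n$ attaining the supremum in \eqref{eq:def_g_a} accomplishes---drives the superfluous roots outward; a quick check at a non-extremal site (e.g. $a=0$, $k^{2}=-\tfrac12$, where the maximiser is $n=1$ but $n=2$ manufactures a second, purely imaginary eigenvalue) shows that extremality cannot be dropped. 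To convert this heuristic into a proof I would apply the argument principle (or Rouché's theorem) to $p_n$ along the unit circle $\T$, using the extremal identity $\bigl|1+\kappa(k)k^{2n}\bigr|=\max_{m}\bigl|1+\kappa(k)k^{2m}\bigr|$ to pin down the winding number and conclude that $p_n$ has exactly one zero in $\D$. Carrying out this winding estimate uniformly in $a$ and $n$ is the delicate part of the argument.
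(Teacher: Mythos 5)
Your first two steps reproduce the paper's proof essentially verbatim: the same rank-one Birman--Schwinger reduction to the scalar equation $1+\omega\,(J_a-z)^{-1}_{n,n}=0$, the same use of Lemma~\ref{lem:optimality_aux} together with the limit $\bigl|1-\frac{k-a}{1-ak}k^{2n-1}\bigr|\to1$ to conclude that the supremum in \eqref{eq:def_g_a} is attained at a finite $n$, and the same adjustment of $\arg\omega$. What you should know is that the paper's proof \emph{stops there}: it establishes only that $z$ is an eigenvalue of $J_a+\omega P_n$ and never addresses the clause ``the only eigenvalue''. So up to the point where the published argument ends, your reconstruction is complete and correct, and it is exactly what the optimality of \eqref{eq:ell1_bound_optimal} actually requires (every non-real boundary point is attained).

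The gap is in your third step, and it is not one you could have closed, because the mechanism you are relying on fails: extremality of $n$ does not push the remaining $2n-2$ roots of $p_n$ out of $\overline{\D}$. Take $a=0$ and $k=re^{\ii\pi/5}$ with $r$ small (say $r=0.1$). Then $\Re k^2>0>\Re k^4$, so $|1-k^4|>1>|1-k^2|$, and since $|1-k^{2m}|\le 1+r^{2m}$ for $m\ge3$ the maximiser in \eqref{eq:def_g_a} is $n=2$. For $n=2$ the characteristic equation reduces (after cancelling $k'^2-1$) to $k'(1+k'^2)=k(1+k^2)$ once $\omega$ is fixed so that $k'=k$ is a solution; the two remaining roots solve $k'^2+kk'+k^2+1=0$, hence $k'\approx\pm\ii-\tfrac{k}{2}$ with $|k'|^2=1\mp\Im k+O(r^2)$, so exactly one of them lies strictly inside $\D$ whenever $k\notin\R$ (numerically $|k'|\approx0.973$ for $r=0.1$). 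That root yields a genuine second $\ell^2$-eigenvalue of $J_0+\omega P_2$ near the origin, sitting well inside the enclosure. Note also that your product-of-roots heuristic is consistent with this: the product of the two superfluous roots is $k^2+1$, of modulus $\approx1$, so they straddle the unit circle rather than both escaping it. Consequently the Rouch\'e/winding argument you outline cannot deliver ``exactly one zero in $\D$''; the uniqueness clause is not a consequence of this construction, and it is not proved in the paper either.
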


\begin{rem}
 Note that the optimality result does not apply to real boundary points of the spectral enclosure~\eqref{eq:ell1_bound_optimal}. It is clear that the boundary points belonging to the open interval $(-2,2)$ cannot be eigenvalues. However, it remains an open problem whether the boundary points located in $\R\setminus[-2,2]$ can be eigenvalues of $J_{a}+V$ for a potential $V$.
\end{rem}

\begin{proof}[Proof of Theorem~\ref{thm:optimality_ell_1_bound}]
Let $Q>0$ and $z\in\C\setminus\R$ such that $\sqrt{|z^{2}-4|}=g_{a}(z)Q$ be given. 
We consider the potential of the form $V=\omega P_{n}$ with $|\omega|=Q$ and determine $n\in\N$ as well as the argument of $\omega$ 
such that $z\in\sigma_\mathrm{p}(J_{a}+\omega P_{n})$. By~\eqref{eq:green_kernel_Ja}, 
the Birmann--Schwinger operator $K(z)$ corresponding to $V=\omega P_{n}$ is
\[
 K(z)=\frac{\omega}{k^{-1}-k}\left(1-\frac{k-a}{1-ak}k^{2n-1}\right)P_{n}.
\]
It follows that $z=k+k^{-1}$, where $k\in\D\setminus(-1,1)$ and $k\neq a^{-1}$, if $|a|>1$, is an eigenvalue of $J_{a}+\omega P_{n}$, if and only if 
\begin{equation}\label{eq:char_eq_inproof}
\frac{\omega}{k^{-1}-k}\left(1-\frac{k-a}{1-ak}k^{2n-1}\right)=-1.
\end{equation}

Note that the supremum in definition~\eqref{eq:def_g_a} of $g_{a}(z)$ is attained for some $n\in\N$ which follows from Lemma~\ref{lem:optimality_aux} and the fact that
\[
 \lim_{n\to\infty}\left(1-\frac{k-a}{1-ak}k^{2n-1}\right)=1.
\]
Thus, we pick $n\in\N$ such that 
\[
 g_{a}(z)=\left|1-\frac{k-a}{1-ak}k^{2n-1}\right|.
\]
It implies that moduli of both sides in~\eqref{eq:char_eq_inproof} coincide. Now, it suffices to set
\[
 \arg\omega :=\pi-\arg\left[\frac{1}{k^{-1}-k}\left(1-\frac{k-a}{1-ak}k^{2n-1}\right)\right]
\]
and equation~\eqref{eq:char_eq_inproof} is fulfilled.
\end{proof}

By estimating $g_{a}(z)$ from above in~\eqref{eq:ell1_bound_optimal}, 
one can obtain various non-optimal spectral enclosures whose advantage may be their simpler form. For example, one has
\[
 g_{a}(z)\leq 1+\left|\frac{k-a}{1-ak}\right|,
\]
which implies the following corollary. 

\begin{cor}
For $v\in\ell^{1}(\N_{0})$ and $a\in\C$, we have
 \begin{equation}\label{eq:ell1_bound_a}
  \sigma_\mathrm{p}(J_{a}+V)
  \subset\{\pm2\}\cup\left\{k+k^{-1} \;\big|\; 0<|k|<1, \; \left|k^{-1}-k\right|\left|1-ak\right|\leq|\left(|1-ak|+|k-a|\right)\|v\|_{\ell^{1}}\right\}\!.
 \end{equation}
\end{cor}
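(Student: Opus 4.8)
The plan is to derive the corollary as an immediate consequence of Theorem~\ref{thm:spectral_enclosure_ell1_pot} by replacing the sharp factor $g_a(z)$ with the cruder upper bound indicated just before the statement. First I would recall that Theorem~\ref{thm:spectral_enclosure_ell1_pot} gives
\[
 \sigma_\mathrm{p}(J_{a}+V)\subset\{z\in\C\setminus(-2,2) \mid \sqrt{|z^{2}-4|}\leq g_{a}(z)\,\|v\|_{\ell^{1}}\},
\]
so it suffices to show that the set on the right-hand side of~\eqref{eq:ell1_bound_optimal} is contained in the set on the right-hand side of~\eqref{eq:ell1_bound_a}. Since enlarging $g_a(z)$ only enlarges the enclosing set, any pointwise upper bound $g_a(z)\leq M(z)$ yields a (generally coarser) valid enclosure with $g_a(z)$ replaced by $M(z)$.

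The key estimate is the bound on $g_a(z)$ quoted in the text. Writing $z=k+k^{-1}$ with $0<|k|<1$, I would verify
\[
 g_{a}(z)=\sup_{n\in\N}\left|1-\frac{k-a}{1-ak}\,k^{2n-1}\right|
 \leq 1+\sup_{n\in\N}\left|\frac{k-a}{1-ak}\,k^{2n-1}\right|
 = 1+\left|\frac{k-a}{1-ak}\right|,
\]
using the triangle inequality termwise and then $|k^{2n-1}|\leq|k|^{2n-1}\leq1$ for $n\geq1$ and $|k|<1$, the supremum over $n$ being achieved in the limit $n=1$ for the modulus of the second term. Substituting this into the condition $\sqrt{|z^{2}-4|}\leq g_a(z)\|v\|_{\ell^1}$ gives
\[
 \sqrt{|z^{2}-4|}\leq\left(1+\left|\frac{k-a}{1-ak}\right|\right)\|v\|_{\ell^{1}}.
\]

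Finally I would rewrite this inequality in the homogeneous form appearing in~\eqref{eq:ell1_bound_a}. Using $z^2-4=(k-k^{-1})^2$, we have $\sqrt{|z^2-4|}=|k^{-1}-k|$, and multiplying the displayed inequality through by $|1-ak|$ clears the denominator to produce
\[
 \left|k^{-1}-k\right|\,\left|1-ak\right|\leq\bigl(|1-ak|+|k-a|\bigr)\,\|v\|_{\ell^{1}},
\]
which is exactly the defining condition in~\eqref{eq:ell1_bound_a}, while the thresholds $\pm2$ (the images of $k=\pm1$, where the parametrisation degenerates) are appended separately as in the statement of Theorem~\ref{thm:spectral_enclosure_ell1_pot}. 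I expect no genuine obstacle here: the only points requiring a little care are the degenerate cases $k=\pm1$ (i.e.\ $z=\pm2$), which are handled by the explicit $\{\pm2\}$ term, and the possible vanishing of $1-ak$, which occurs only at the eigenvalue $a+a^{-1}$ when $|a|>1$ and is consistent with the convention $g_a(a+a^{-1})=\infty$ noted after~\eqref{eq:def_g_a}.
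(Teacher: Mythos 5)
Your proposal is correct and follows essentially the same route as the paper, which derives the corollary in one line from Theorem~\ref{thm:spectral_enclosure_ell1_pot} via the triangle-inequality bound $g_a(z)\leq 1+\left|\tfrac{k-a}{1-ak}\right|$ and the identity $\sqrt{|z^2-4|}=|k^{-1}-k|$. The only nitpick is that your final equality should be an inequality, since $\sup_{n\geq1}|k|^{2n-1}=|k|<1$ rather than $1$; this only makes your intermediate bound sharper and does not affect the conclusion.
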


Recall that in the case of discrete Schr{\" o}dinger operators on $\Z$ with complex $\ell^{1}$-potentials, see~\cite[Thm.~1.1]{ibr-sta_ieot19}, the optimal spectral enclosure looks the same as~\eqref{eq:ell1_bound_optimal} but the function $g_{a}$ is not present. This is quite analogous to the known results 
in the continuous setting:
While the optimal spectral enclosure of Schr{\" o}dinger operators on the line with integrable complex potentials is a disk centered at the origin, see~\cite[Thm.~4]{abr-asl-dav_jpa01}, it is deformed in the case of Schr{\" o}dinger operators on the half-line with Dirichlet boundary condition by an influence of an extra term similar to the function $g_{a}$, see \cite[Thm.~1.1]{Frank-Laptev-Seiringer_2011}, and also \cite{Enblom_2017}.

Due to the presence of~$g_{a}$ in~\eqref{eq:ell1_bound_a}, the geometry of the spectral enclosure is highly non-trivial. Figures~\ref{fig:spec_bound_dirichlet} and~\ref{fig:spec_bound_neumann} show the boundary curves of the spectral enclosures for discrete Dirichlet and Neumann Schr{\" o}dinger operators given by the equations
\[
 \sqrt{|z^{2}-4|}=g_{a}(z)Q, \quad a\in\{0,1\},
\]
for several values of the parameter $Q=\|v\|_{\ell^{1}}$, where
\[
 g_{0}\left(k^{-1}+k\right)=\sup_{n\in\N}\left|1-k^{2n}\right| \quad\mbox{ and }\quad 
 g_{1}\left(k^{-1}+k\right)=\sup_{n\in\N}\left|1+k^{2n-1}\right|.
\]
Three more illustrative plots for other values of $a$ are postponed to Appendix.

\begin{figure}[H]
    \centering
	\includegraphics[width=0.99\textwidth]{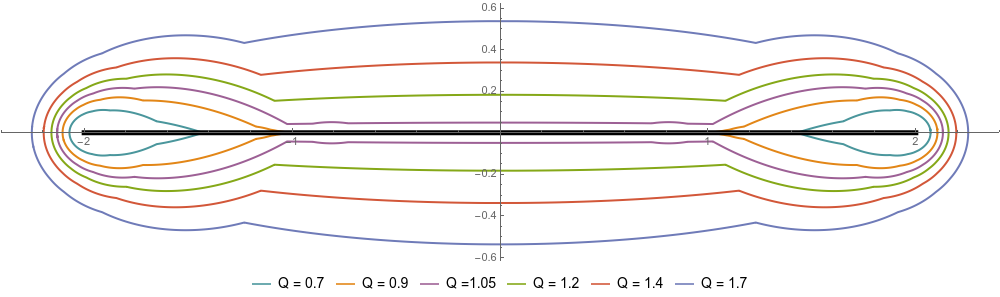}
    \caption{Spectral enclosures for the discrete Dirichlet Schr{\" o}dinger operator ($a=0$).}
    \label{fig:spec_bound_dirichlet}
\end{figure}
\begin{figure}[H]
	\includegraphics[width=0.99\textwidth]{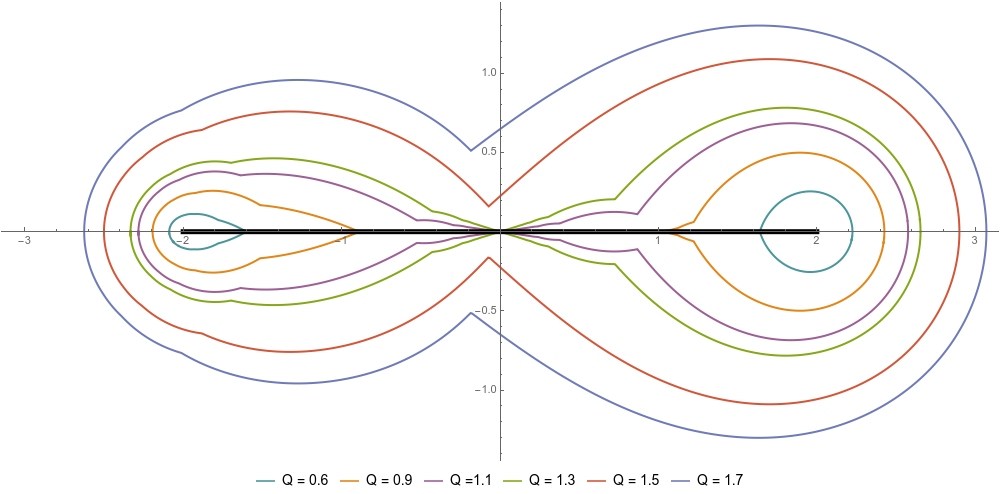}
	\caption{Spectral enclosures for the discrete Neumann Schr{\" o}dinger operator ($a=1$).}
	\label{fig:spec_bound_neumann}
\end{figure}

\section{Intermezzo: Optimal discrete Hardy inequalities for $-\Delta_{a}$}
\label{sec:hardy}

We interrupt the analysis of spectral properties of complex perturbations of discrete Laplacians by a study of discrete Hardy inequalities. These inequalities will be applied in the forthcoming section in results on a spectral stability of $-\Delta_{a}+V$ for small complex potentials $V$. Nevertheless, results of this section should be of independent interest.

Recall that, for $u\in\ell^{2}(\N)$, the classical discrete Hardy inequality reads
\begin{equation}\label{eq:hardy_classical}
 \sum_{n=1}^{\infty}\left|u_{n-1}-u_{n}\right|^{2}\geq\sum_{n=1}^{\infty}\frac{|u_{n}|^{2}}{4n^{2}},
\end{equation}
where $u_{0}:=0$; see~\cite{kuf-mal-per_amm06} for a historical account of the inequality. If we temporarily denote by $W$ the diagonal operator defined by equations $We_{n}:=4n^{-2}e_{n}$, for $n\in\N$, and recall definition~\eqref{eq:def_delta_a} with $a=0$, we may rewrite~\eqref{eq:hardy_classical} as the operator inequality 
\begin{equation}
  -\Delta_{0}\geq W,
  \label{eq:hardy_operator_form}
\end{equation}
understood in the sense of quadratic forms. 
More generally,
if~\eqref{eq:hardy_operator_form} holds for 
some
$0\leq W=\diag(w_{1},w_{2},\dots)$, 
the operator~$-\Delta_0$ is said to satisfy a Hardy inequality and
the sequence $w=\{w_{n}\}_{n=1}^{\infty}$ is said to be a~Hardy weight.

A surprising fact observed by Keller, Pinchover, and Pogorzelski~\cite{kel-pin-pog_amm18, kel-pin-pog_cmp18} is that the classical discrete Hardy inequality~\eqref{eq:hardy_classical} is not optimal in the following sense: A Hardy weight $w$ is said to be an \emph{optimal} Hardy weight, if for any other Hardy weight $\tilde{w}$, the point-wise inequality $\tilde{w}\geq w$ implies $\tilde{w}=w$. In~\cite{kel-pin-pog_cmp18}, it is proved that the optimal Hardy weight actually reads
\begin{equation}
 w_{n}:=2-\sqrt{1-\frac{1}{n}}-\sqrt{1+\frac{1}{n}}>\frac{1}{4n^{2}}, \quad n\in\N.
\label{eq:optimal_hardy_weight}
\end{equation}
At this point, the discrete and continuous case are not completely analogical since, in the continuous case, the well-known Hardy inequality
\[
 \int_{0}^{\infty}|u'(x)|^{2}\dd x\geq\int_{0}^{\infty}\frac{|u(x)|^{2}}{4x^{2}}\dd x
\]\
valid for every $u\in W^{1,2}_{0}((0,\infty))$
is optimal; see~\cite[Sec.~8.1]{wei_cpde99} and references therein.

\begin{rem}\label{rem:stronger_optimality}
In fact, a stronger notion of optimality, which involves two additional conditions on $w$, was considered in~\cite{kel-pin-pog_cmp18},
based on the pioneering works 
\cite{Devyver-Fraas-Pinchover_2012, Devyver-Fraas-Pinchover_2014}
in the continuous case. 
Namely, a~Hardy weight $w$ is said to be optimal, if and only if the following three conditions hold:
\begin{enumerate}
\item[(opt1)] If $\tilde{w}$ is a Hardy weight such that $\tilde{w}\geq w$, then $\tilde{w}=w$,
\item[(opt2)] $\ker(-\Delta_{0}-W)=\{0\}$,
\item[(opt3)] $(\forall \epsilon>0)(\forall n\in\N)(\exists\psi\in\ell^{2}(\N))(\|DQ_{n}\psi\|^{2}<(1+\epsilon)\langle\psi,WQ_{n}\psi\rangle)$,
\end{enumerate}
where $W=\diag(w_{1},w_{2},\dots)$, $D$ is the difference operator defined in~\eqref{eq:def_diff_op}, and $Q_{n}$ is the orthogonal projection onto $\spn\{e_{n},e_{n+1},\dots\}$. The Hardy weight~\eqref{eq:optimal_hardy_weight} enjoys all the three properties (opt1--3).
\end{rem}
 
Our next goal is to establish the discrete Hardy inequalities for the discrete Robin Laplacian, i.e., to replace $-\Delta_{0}$ in~\eqref{eq:hardy_operator_form} by $-\Delta_{a}$. At this point, we restrict the parameter $a$ to the interval $[0,1]$, so the Dirichlet and the Neumann cases correspond 
to the two extreme points~$0$ and~$1$, respectively. 
This restriction is without loss of generality,
because the concept of Hardy inequalities is meaningful
for the self-adjoint setting $a \in \R$ only,
the couplings $a \not\in [-1,1]$ are disregarded because of the existence 
of a discrete eigenvalue (see Theorem~\ref{thm:spectrum_J_a})
and the case of $a \in [-1,0]$ can be deduced from $a \in [0,1]$
by the duality of Proposition~\ref{prop:unitary}.

In fact, it makes sense to consider only $a\in[0,1)$ since, in the Neumann case, there is no Hardy inequality analogously to the well-known continuous case. In other words, the operator $-\Delta_{1}$ is critical in the sense of the following statement.

\begin{thm}
 If $W\geq0$ is a diagonal operator such that $-\Delta_{1}\geq W$, then $W=0$.
\end{thm}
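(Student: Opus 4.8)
The plan is to exploit the explicit form of $-\Delta_{1}=DD^{*}$, namely
\[
\langle\psi,-\Delta_{1}\psi\rangle=\|D^{*}\psi\|^{2}=\sum_{n=1}^{\infty}|\psi_{n+1}-\psi_{n}|^{2},
\]
whose zero mode is the constant sequence $(1,1,1,\dots)\notin\ell^{2}(\N)$. This reflects the recurrence of the reflected walk on $\N$, and the criticality claim is precisely the statement that a single point has vanishing capacity for this form. I would therefore prove it by constructing a \emph{null sequence}: finitely supported vectors that reproduce the constant zero mode locally while carrying vanishing energy.

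Concretely, for $k\in\N$ I set
\[
\psi^{(k)}_{n}:=\max\left\{0,\,\min\left\{1,\,2-\frac{\log n}{\log k}\right\}\right\},
\]
so that $\psi^{(k)}_{n}=1$ for $n\le k$, $\psi^{(k)}_{n}=0$ for $n\ge k^{2}$, and $\psi^{(k)}$ interpolates logarithmically in between; each $\psi^{(k)}$ is finitely supported, hence lies in $\ell^{2}(\N)$. The only nonzero first differences occur for $k\le n<k^{2}$, where $\psi^{(k)}_{n+1}-\psi^{(k)}_{n}=-\log(1+n^{-1})/\log k$ (and the clamp at $0$ lands exactly on the natural zero at $n=k^{2}$, so no spurious jump is introduced). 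Using $\log(1+n^{-1})\le n^{-1}$, the key energy estimate
\[
\|D^{*}\psi^{(k)}\|^{2}\le\frac{1}{(\log k)^{2}}\sum_{n=k}^{\infty}\frac{1}{n^{2}}\le\frac{2}{k(\log k)^{2}}
\]
follows, and the right-hand side tends to $0$ as $k\to\infty$. This step, where the special structure of the Neumann Laplacian on the half-line enters, is the heart of the argument, and I expect it to be the only point of genuine content; the logarithmic profile is chosen precisely so that the energy along the transition region is negligible.

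Finally I would combine these facts. Fix $m\in\N$ and take any $k\ge m$, so that $\psi^{(k)}_{m}=1$. Since $W=\diag(w_{1},w_{2},\dots)\ge0$, discarding all but the $m$-th term gives
\[
0\le w_{m}=w_{m}|\psi^{(k)}_{m}|^{2}\le\sum_{n=1}^{\infty}w_{n}|\psi^{(k)}_{n}|^{2}=\langle\psi^{(k)},W\psi^{(k)}\rangle,
\]
and the assumed form inequality $-\Delta_{1}\ge W$ bounds the right-hand side by $\|D^{*}\psi^{(k)}\|^{2}$. Letting $k\to\infty$ and invoking the energy estimate forces $w_{m}=0$; as $m\in\N$ was arbitrary, $W=0$. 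The whole scheme is the discrete counterpart of the classical fact that the Neumann Laplacian on a half-line admits no Hardy inequality because the constant function is a (non-$L^{2}$) harmonic ground state.
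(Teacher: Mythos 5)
Your proof is correct and follows essentially the same route as the paper: both approximate the constant zero mode of $-\Delta_{1}$ by finitely supported cutoff sequences of vanishing energy $\|D^{*}\psi\|^{2}$ and then invoke the assumed form inequality to kill $W$. The only differences are cosmetic --- the paper uses a linear cutoff on $[N,2N]$ (energy exactly $1/N$) and concludes $\sum_{n}w_{n}\le 0$ via monotone convergence, whereas you use a logarithmic cutoff on $[k,k^{2}]$ and bound each $w_{m}$ individually; your profile is more elaborate than needed in this one-dimensional setting but works just as well.
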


\begin{proof}
Suppose that the inequality $-\Delta_{1}\geq W$ holds for some $W\geq0$. We show that it implies $W=0$. The idea is based on the observation that the constant sequence $\psi\equiv 1$ is anihilated by $-\Delta_{1}$. Since $1\notin\ell^{2}(\N)$ we need to use a regularized sequence which can be chosen, for instance, as follows:
\[
 \psi_{n}^{(N)}:=\begin{cases}
  1, \quad &1\leq n<N,\\
  \frac{2N-n}{N}, \quad &N\leq n\leq 2N,\\ 
  0, \quad &2N<n,
 \end{cases}
\]
for $n,N\in\N$. Notice that $\psi^{(N)}\to 1$ point-wise as $N\to\infty$ and $\psi^{(N)}\leq\psi^{(N+1)}$ for all $N\in\N$. Further, an easy calculation shows that
\[
 \left\langle\psi^{(N)},-\Delta_{1}\psi^{(N)}\right\rangle=\left\|D^{*}\psi^{(N)}\right\|^{2}=\sum_{n=1}^{\infty}\left|\psi^{(N)}_{n+1}-\psi^{(N)}_{n}\right|^{2}=\frac{1}{N}\to0 \quad \mbox{ as } N\to\infty,
\]
while
\[
 \left\langle\psi^{(N)},W\psi^{(N)}\right\rangle=\sum_{n=1}^{\infty}w_{n}\left|\psi^{(N)}_{n}\right|^{2}\to \sum_{n=1}^{\infty}w_{n} \quad \mbox{ as } N\to\infty,
\]
by the Monotone Convergence Theorem. It follows from the assumption $-\Delta_{1}\geq W$ that
\[
\left\langle\psi^{(N)},-\Delta_{1}\psi^{(N)}\right\rangle\geq \left\langle\psi^{(N)},W\psi^{(N)}\right\rangle, \quad \forall N\in\N.
\]
By sending $N\to\infty$, we obtain inequality $\sum_{n=1}^{\infty}w_{n}\leq 0$. Since $w_{n}\geq0$ for all $n\in\N$, we conclude that $w_{n}=0$ for all $n\in\N$.
\end{proof}

Since the discrete Robin Laplacian $-\Delta_{a}$ is a rank-one perturbation of $-\Delta_{0}$, namely
\begin{equation}
 -\Delta_{a}=-\Delta_{0}-aP_{1},
\label{eq:delta_a_rank-one_perturb}
\end{equation}
where $P_{1}:=\langle\,\cdot\,,e_{1}\rangle e_{1}$
as in Theorem~\ref{thm:optimality_ell_1_bound}, 
we can deduce Hardy inequalities for $-\Delta_{a}$ from those for $-\Delta_{0}$.

Next, we prove an abstract identity which yields a method for generating Hardy inequalities for the discrete Dirichlet Laplacian. Moreover, it determines the reminder term in the Hardy inequlity and, via a control of the reminder, provides a~sufficient condition guaranteeing the optimality. Recall that here we mean by the optimality the property (opt1) from Remark~\ref{rem:stronger_optimality}. Spaces of finitely supported sequences indexed by $\N$ and $\N_{0}:=\N \cup \{0\}$ are denoted by $C_{0}(\N)$ and $C_{0}(\N_{0})$, respectively.

\begin{thm}\label{thm:generalized_discr_hardy}
 Let $g=\{g_{n}\}_{n=1}^{\infty}$ be a positive sequence such that $-\Delta_{0}g\geq0$ entrywise. 
 Then, for any $u\in C_{0}(\N_{0})$ with $u_{0}=0$, we have the identity
 \begin{equation}
  \sum_{n=1}^{\infty}\left|u_{n}-u_{n-1}\right|^{2}=\sum_{n=1}^{\infty}w_{n}|u_{n}|^{2}+\sum_{n=2}^{\infty}\left|\sqrt{\frac{g_{n-1}}{g_{n}}}u_{n}-\sqrt{\frac{g_{n}}{g_{n-1}}}u_{n-1}\right|^{2},
 \label{eq:gen_discr_hardy_id}
 \end{equation}
 where
 \[
  w_{n}:=\frac{(-\Delta_{0}g)_{n}}{g_{n}}.
 \]
 In particular, it follows the generalized discrete Hardy inequality
 \[
  \sum_{n=1}^{\infty}w_{n}|u_{n}|^{2}\leq\sum_{n=1}^{\infty}\left|u_{n}-u_{n-1}\right|^{2}.
 \] 
 Moreover, if there exists a sequence of elements $\xi^{N}\in C_{0}(\N)$ such that $\xi^{N}\leq\xi^{N+1}$, $\xi^{N}\to 1$ as $N\to\infty$ pointwise, and
 \begin{equation}
  \lim_{N\to\infty}\,\sum_{n=2}^{\infty}g_{n}g_{n-1}\left|\xi_{n}^{N}-\xi_{n-1}^{N}\right|^{2}=0,
 \label{eq:opt_cond_gen_discr_hardy}
 \end{equation}
 then $w$ is optimal.
\end{thm}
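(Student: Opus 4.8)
The plan is to prove the identity~\eqref{eq:gen_discr_hardy_id} by a direct ``ground-state substitution'' (factorization) computation, then read off the Hardy inequality as its immediate corollary, and finally establish optimality (property (opt1)) by a null-sequence argument modeled on the criticality proof for $-\Delta_1$ given earlier in the excerpt.

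For the identity itself, the natural approach is the discrete analogue of writing $u = g h$ and integrating by parts. Concretely, I would set $u_n = g_n h_n$ for $n \geq 0$ (with $h_0$ irrelevant since $u_0 = 0 = g_0 h_0$ once we adopt the convention forcing $u_0=0$, noting $g$ is only defined for $n\geq 1$ so the $n=1$ boundary term needs care), and expand the left-hand side $\sum_{n\geq 1}|u_n - u_{n-1}|^2$. The key algebraic step is the elementary identity, valid for the summand, that splits the difference $|g_n h_n - g_{n-1}h_{n-1}|^2$ into a ``weight'' piece proportional to $g_n g_{n-1}|h_n - h_{n-1}|^2$ plus cross terms that, after an Abel summation (discrete integration by parts), collapse against the second difference $(-\Delta_0 g)_n = 2g_n - g_{n-1} - g_{n+1}$. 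Since $u \in C_0(\N_0)$ is finitely supported, the Abel summation produces no boundary contributions at infinity, and the telescoping leaves exactly $\sum_n w_n |u_n|^2$ with $w_n = (-\Delta_0 g)_n / g_n$. Rewriting $g_n g_{n-1}|h_n - h_{n-1}|^2$ back in terms of $u$ via $h_n = u_n/g_n$ yields precisely the remainder $\big|\sqrt{g_{n-1}/g_n}\,u_n - \sqrt{g_n/g_{n-1}}\,u_{n-1}\big|^2$. The main bookkeeping obstacle here is handling the $n=1$ boundary term correctly: because $g$ is indexed from $n=1$ while the difference $u_1 - u_0$ involves $u_0 = 0$, I would verify that the substitution is consistent at $n=1$ (using $u_0 = 0$), which is what forces the remainder sum to start at $n=2$ and the weight sum at $n=1$.

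The Hardy inequality is then immediate: the remainder sum is a sum of moduli squared, hence nonnegative, so dropping it gives $\sum_n w_n |u_n|^2 \leq \sum_n |u_n - u_{n-1}|^2$. The hypothesis $-\Delta_0 g \geq 0$ entrywise guarantees $w_n \geq 0$, so this is a genuine Hardy inequality with Hardy weight $w$.

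For optimality I would argue by contradiction in the spirit of property (opt1): suppose $\tilde w \geq w$ is another Hardy weight, i.e.\ $\sum_n \tilde w_n |u_n|^2 \leq \sum_n |u_n - u_{n-1}|^2$ for all admissible $u$, and show $\tilde w = w$. The idea is to test the inequality along the sequence $u^N := g\,\xi^N$ (the ground state $g$ truncated by $\xi^N$), which lies in $C_0(\N_0)$. Applying the identity~\eqref{eq:gen_discr_hardy_id} to $u^N$, the remainder term becomes exactly $\sum_{n\geq 2} g_n g_{n-1}|\xi_n^N - \xi_{n-1}^N|^2$, which tends to $0$ by hypothesis~\eqref{eq:opt_cond_gen_discr_hardy}; hence the ``energy deficit'' $\sum_n |u_n^N - u_{n-1}^N|^2 - \sum_n w_n |u_n^N|^2 \to 0$. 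Combined with the assumption $\tilde w \geq w$ and the Hardy inequality for $\tilde w$, one squeezes $\sum_n (\tilde w_n - w_n)|g_n \xi_n^N|^2 \leq \sum_n g_n g_{n-1}|\xi_n^N - \xi_{n-1}^N|^2 \to 0$; since $\tilde w_n - w_n \geq 0$ and, by monotonicity $\xi^N \nearrow 1$ with the Monotone Convergence Theorem, $|g_n \xi_n^N|^2 \nearrow g_n^2 > 0$, each term $(\tilde w_n - w_n) g_n^2$ must vanish, forcing $\tilde w_n = w_n$ for every $n$. \emph{The hard part} I expect is making the squeeze rigorous: one must justify passing from the inequality for $\tilde w$ back through the identity for $w$ to isolate $\sum_n(\tilde w_n - w_n)|u_n^N|^2$ bounded by the vanishing remainder, and then invoke monotone convergence term-by-term rather than on the whole sum, since the limiting sequence $g$ need not be square summable.
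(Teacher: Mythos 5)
Your proposal is correct and follows essentially the same route as the paper: the identity is obtained by the ground-state (factorization) transform --- a pointwise algebraic identity plus Abel summation with no boundary terms thanks to the finite support of $u$ --- and optimality is proved by testing with $u^N = g\,\xi^N$, letting the remainder vanish via~\eqref{eq:opt_cond_gen_discr_hardy}, and applying monotone convergence, exactly as you describe. The paper merely organizes the algebra slightly differently (introducing $h:=-Dg$ and summing by parts on $\sum_n w_n|u_n|^2$ rather than substituting $u=gh$ first), which is the same computation.
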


\begin{proof}
 Let us temporarily denote $h:=-Dg$. Then for any $u\in C_{0}(\N_{0})$ with $u_{0}=0$, one has
 \begin{align*}
  \left|\sqrt{1-\frac{h_{n}}{g_{n}}}u_{n}-\sqrt{1+\frac{h_{n}}{g_{n-1}}}u_{n-1}\right|^{2}\!=
  \left(1-\frac{h_{n}}{g_{n}}\right)|u_{n}|^{2}+\!\left(1+\frac{h_{n}}{g_{n-1}}\right)|u_{n-1}|^{2}-2\Re\left(\overline{u}_{n}u_{n-1}\right)\!,
 \end{align*}
 which further implies the identity
 \[
 \left|\sqrt{\frac{g_{n-1}}{g_{n}}}u_{n}-\sqrt{\frac{g_{n}}{g_{n-1}}}u_{n-1}\right|^{2}=\left|u_{n}-u_{n-1}\right|^{2}-h_{n}\left(\frac{|u_{n}|^{2}}{g_{n}}-\frac{|u_{n-1}|^{2}}{g_{n-1}}\right),
 \]
 for all $n\in\N$, where the terms 
 \[
 \sqrt{\frac{g_{n}}{g_{n-1}}}u_{n-1} \quad\mbox{ and }\quad \frac{|u_{n-1}|^{2}}{g_{n-1}}
 \]
 are to be interpreted as zeros for $n=1$. Then by summing by parts, we obtain
 \begin{align*}
  \sum_{n=1}^{\infty}w_{n}|u_{n}|^{2}&=\sum_{n=1}^{\infty}\left(h_{n}-h_{n+1}\right)\frac{|u_{n}|^{2}}{g_{n}}=\sum_{n=1}^{\infty}h_{n}\left(\frac{|u_{n}|^{2}}{g_{n}}-\frac{|u_{n-1}|^{2}}{g_{n-1}}\right)\\
  &=\sum_{n=1}^{\infty}\left|u_{n}-u_{n-1}\right|^{2}-\sum_{n=1}^{\infty}\left|\sqrt{\frac{g_{n-1}}{g_{n}}}u_{n}-\sqrt{\frac{g_{n}}{g_{n-1}}}u_{n-1}\right|^{2},
 \end{align*}
 which establishes~\eqref{eq:gen_discr_hardy_id} since the first term of the last sum vanishes.
 
 Next, let a sequence of elements $\xi^{N}\in C_{0}(\N)$ satisfying the assumptions is given. Suppose further that $\tilde{w}$ is another Hardy weight, i.e.,
 \[
 \sum_{n=1}^{\infty}\tilde{w}_{n}|u_{n}|^{2}\leq\sum_{n=1}^{\infty}\left|u_{n}-u_{n-1}\right|^{2},
 \]
 for all $u\in C_{0}(\N_{0})$ with $u_{0}=0$, and $\tilde{w}\geq w$. Then, by~\eqref{eq:gen_discr_hardy_id}, we have
 \[
  0\leq\sum_{n=1}^{\infty}\left(\tilde{w}_{n}-w_{n}\right)|u_{n}|^{2}\leq \sum_{n=2}^{\infty}\left|\sqrt{\frac{g_{n-1}}{g_{n}}}u_{n}-\sqrt{\frac{g_{n}}{g_{n-1}}}u_{n-1}\right|^{2}
 \]
 for all $u\in C_{0}(\N_{0})$ with $u_{0}=0$. Plugging $u_{n}:=g_{n}\xi_{n}^{N}$ into the last expression, we get
 \[
   0\leq\sum_{n=1}^{\infty}\left(\tilde{w}_{n}-w_{n}\right)\left|g_{n}\xi_{n}^{N}\right|^{2}\leq\sum_{n=2}^{\infty}g_{n}g_{n-1}\left|\xi_{n}^{N}-\xi_{n-1}^{N}\right|^{2}.
 \]
 Using~\eqref{eq:opt_cond_gen_discr_hardy} and the assumptions $\xi^{N}\leq\xi^{N+1}\to 1$, we arrive, by the Monotone Convergence Theorem, at the equality
 \[
 \sum_{n=1}^{\infty}\left(\tilde{w}_{n}-w_{n}\right)g_{n}^{2}=0.
 \]
 Since each term of the sum is non-negative and $g_{n}\neq0$ for all $n\in\N$, we conclude that $w=\tilde{w}$.
\end{proof}

\begin{rem}
 Theorem~\ref{thm:generalized_discr_hardy} is a generalization of \cite[Thm.~1]{kre-sta_amm21}.
\end{rem}

An interesting corollary of Theorem~\ref{thm:generalized_discr_hardy} is that we have an infinitely many fully explicit optimal discrete Hardy inequalities.

\begin{cor}\label{cor:opt_hardy_n^q}
 For any $q\in(0,1/2]$, we have the optimal discrete Hardy inequality
 \[
  \sum_{n=1}^{\infty}w_{n}(q)|u_{n}|^{2}\leq\sum_{n=1}^{\infty}|u_{n}-u_{n-1}|^{2},
 \]
 where 
 \[
  w_{n}(q):=2-\left(1-\frac{1}{n}\right)^{q}-\left(1+\frac{1}{n}\right)^{q}=
  \begin{cases}
  \displaystyle 2-2^{q}&\quad\mbox{ if } n=1,\\[3pt]
  \displaystyle 2q\sum_{k=1}^{\infty}\frac{(1-q)_{2k-1}}{(2k)!}\frac{1}{n^{2k}}&\quad\mbox{ if } n\geq 2.
  \end{cases}
 \]
\end{cor}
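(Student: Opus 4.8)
The plan is to apply Theorem~\ref{thm:generalized_discr_hardy} with the explicit choice $g_n := n^q$, $n\in\N$. First I would verify the two standing hypotheses on $g$. Positivity $g_n>0$ is immediate for $q>0$. For the entrywise inequality $-\Delta_{0}g\geq0$ I would treat the boundary term separately: $(-\Delta_{0}g)_1 = 2-2^q\geq0$ precisely because $q\leq1$, while for $n\geq2$ one has $(-\Delta_{0}g)_n = 2n^q-(n-1)^q-(n+1)^q\geq0$ by the concavity of $t\mapsto t^q$ on $(0,\infty)$ for $q\in(0,1)$ (the midpoint form of Jensen's inequality). Hence Theorem~\ref{thm:generalized_discr_hardy} is applicable for every $q\in(0,1]$, in particular for $q\in(0,1/2]$.

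Next I would identify the resulting weight. By definition $w_n=(-\Delta_{0}g)_n/g_n$, which gives at once $w_1=2-2^q$ and, for $n\geq2$, $w_n=2-(1-1/n)^q-(1+1/n)^q$, i.e.\ the claimed closed form. To pass to the series representation I would expand both $(1\pm 1/n)^q$ by the binomial theorem, legitimate since $1/n\leq1/2<1$ for $n\geq2$; the odd powers cancel, leaving $w_n=-2\sum_{k=1}^{\infty}\binom{q}{2k}n^{-2k}$. A short rewriting of the falling factorial via $q(q-1)\cdots(q-2k+1)=-q\,(1-q)_{2k-1}$ turns $-2\binom{q}{2k}$ into $2q\,(1-q)_{2k-1}/(2k)!$, which is exactly the stated coefficient, and is manifestly positive (confirming $w_n(q)>0$).

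It then remains to establish optimality in the sense of (opt1), for which I would exhibit a family $\xi^N\in C_{0}(\N)$ meeting the hypotheses of the final part of Theorem~\ref{thm:generalized_discr_hardy}. Here I would use a \emph{logarithmic} cutoff: fix the $1$-Lipschitz profile $f\colon[0,\infty)\to[0,1]$ equal to $1$ on $[0,1]$, affine decreasing on $[1,2]$, and $0$ on $[2,\infty)$, and set $\xi_n^N:=f(\log n/\log N)$ for $N\geq2$, so that $\xi^N$ equals $1$ for $n\leq N$, interpolates on $N\leq n\leq N^2$, and vanishes for $n>N^2$; in particular $\xi^N$ is finitely supported. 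Since $f$ is nonincreasing and $N\mapsto\log n/\log N$ is nonincreasing, one gets the monotonicity $\xi^N\leq\xi^{N+1}$, and $\xi_n^N\to f(0)=1$ pointwise. For the energy in~\eqref{eq:opt_cond_gen_discr_hardy} I would use that $f$ is $1$-Lipschitz, so $|\xi_n^N-\xi_{n-1}^N|\leq\log(n/(n-1))/\log N\leq 1/((n-1)\log N)$ and these increments vanish outside $N<n\leq N^2$; together with $g_ng_{n-1}=n^q(n-1)^q$ this bounds the sum by a constant multiple of $(\log N)^{-2}\sum_{m=N}^{N^2}m^{2q-2}$.

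The main obstacle is the endpoint $q=1/2$. For $q<1/2$ the exponent $2q-2<-1$ makes the inner sum $O(N^{2q-1})\to0$, so even a crude linear ramp would suffice; but at $q=1/2$ one has $2q-2=-1$ and $\sum_{m=N}^{N^2}m^{-1}\sim\log N$, so the bound only degrades to $O(1/\log N)$, still tending to $0$ thanks to the logarithmic normalization. This is exactly why the logarithmic cutoff (rather than a linear one) is required and why the admissible range closes precisely at $q=1/2$. Collecting these estimates verifies~\eqref{eq:opt_cond_gen_discr_hardy}, and Theorem~\ref{thm:generalized_discr_hardy} then yields the optimality of $w(q)$.
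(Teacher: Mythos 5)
Your proposal is correct and follows essentially the same route as the paper: the same choice $g_n=n^q$ in Theorem~\ref{thm:generalized_discr_hardy}, and the same logarithmic cutoff $\xi_n^N$ (your $f(\log n/\log N)$ is exactly the paper's piecewise formula) with the same $O(1/\log N)$ estimate for condition~\eqref{eq:opt_cond_gen_discr_hardy}. The only difference is that you spell out the entrywise superharmonicity of $n^q$ and the binomial-series identity for $w_n(q)$, which the paper leaves implicit.
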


\begin{rem}
 Note that, for all $n\geq2$ and $q\in(0,1/2)$, one has
 \[
  w_{n}(q)<w_{n}\!\left(\frac{1}{2}\right),
  \qquad \mbox{while} \qquad
  w_{1}(q)>w_{1}\!\left(\frac{1}{2}\right).
 \]
 If $q\in(1/2,1)$, the discrete Hardy inequality with the weight $w_{n}(q)$ still holds but it is not optimal since $w_{n}(q)<w_{n}(1/2)$ for all $n\in\N$. 
 For $q>1$, the weight $w_{n}(q)$ is not non-negative, and $w(1)$ is trivial.
\end{rem}

\begin{proof}[Proof of Corollary~\ref{cor:opt_hardy_n^q}]
 To obtain the inequality, it suffices to put $g_{n}:=n^{q}$ in Theorem~\ref{thm:generalized_discr_hardy}.  
 
 To prove the optimality, we define
 \[
    \xi_n^N :=
  \begin{cases}
    1 & \mbox{if} \quad n < N \,,
    \\[2pt]
    \frac{\displaystyle 2 \log{N}-\log n}{\displaystyle\log N}
    & \mbox{if} \quad N\leq n \leq N^2 \,,
    \\[2pt]
    0 & \mbox{if} \quad n > N^2 \,,
  \end{cases}
 \]
 for $N\geq2$. Then $\xi^{N}\in C_{0}(\N)$ such that $\xi^{N}\leq\xi^{N+1}$, $\xi^{N}\to 1$ as $N\to\infty$. Hence it remains to check condition~\eqref{eq:opt_cond_gen_discr_hardy}. 
We have
\begin{align*}
\sum_{n=2}^\infty n^{q}(n-1)^{q}\left|\xi_{n}^{N}-\xi_{n-1}^{N}\right|^{2}
&=\frac{1}{\log^{2}N}\sum_{n=N+1}^{N^{2}}n^{q}(n-1)^{q}\log^{2}\left(\frac{n}{n-1}\right)\\
&\leq\frac{1}{\log^{2}N}\sum_{n=N+1}^{N^{2}}\frac{n^{q}(n-1)^{q}}{(n-1)^{2}}
\leq\frac{2}{\log^{2}N}\sum_{n=N+1}^{N^{2}}\frac{1}{(n-1)^{2-2q}}\\
&\leq\frac{2}{\log^{2}N}\sum_{n=N+1}^{N^{2}}\frac{1}{(n-1)}\leq\frac{2}{\log^{2}N}\int_{N}^{N^{2}}\frac{\dd n}{n-1}\\
&=\frac{2\log(N+1)}{\log^{2}N}\leq\frac{4}{\log N}.
\end{align*}
Since the last expression tends to zero as $N\to\infty$, we are done.
\end{proof}

\begin{rem}
Clearly, if $q=1/2$, Corollary~\ref{cor:opt_hardy_n^q} reestablishes the result of~\cite{kel-pin-pog_amm18} (see also \cite{kre-sta_amm21}). 
Concerning the stronger notion of optimality from Remark~\ref{rem:stronger_optimality}, for $q\in(0,1/2)$, 
one only has (opt1) and (opt2) but not (opt3).
\end{rem}

Now we may combine Corollary~\ref{cor:opt_hardy_n^q} with~\eqref{eq:delta_a_rank-one_perturb} to deduce optimal discrete Hardy inequalities for the Robin Laplacian $-\Delta_{a}$. However, depending on the value of $a\in[0,1)$, the range for the parameter $q\in[0,1/2)$ has to be additionally restricted in order to the corresponding Hardy weight remains non-negative. To this end, for $a\in[0,1)$, we denote
\begin{equation}
 q_{a}:=\min\left(\log_{2}(2-a),\,\frac{1}{2}\right).
\label{eq:def_q_a}
\end{equation}

\begin{thm}[discrete Hardy inequalities for $-\Delta_{a}$]\label{thm:hardy_ineq_aq}
For any $a\in[0,1)$ and $q\in(0,q_{a}]$, the Hardy inequality
\[
 -\Delta_{a}\geq W
\]
holds, where $W=\diag(w_{1},w_{2},\dots)$ with 
\[
 w_{n}=2-\left(1-\frac{1}{n}\right)^{q}-\left(1+\frac{1}{n}\right)^{q}-a\delta_{n,1}.
\]
Moreover, all these Hardy weights are optimal.
\end{thm}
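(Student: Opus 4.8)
The plan is to reduce the whole statement to the already-settled Dirichlet case of Corollary~\ref{cor:opt_hardy_n^q}, using that $-\Delta_{a}$ is the rank-one perturbation $-\Delta_{0}-aP_{1}$ recorded in~\eqref{eq:delta_a_rank-one_perturb}. On the form level, for every $u\in C_{0}(\N_{0})$ with $u_{0}=0$ one has
\[
 \langle u,-\Delta_{a}u\rangle=\sum_{n=1}^{\infty}|u_{n}-u_{n-1}|^{2}-a|u_{1}|^{2}.
\]
Consequently, a non-negative diagonal operator $\tilde{W}$ with entries $\tilde{w}_{n}$ satisfies $-\Delta_{a}\geq\tilde{W}$ if and only if the shifted sequence $n\mapsto\tilde{w}_{n}+a\delta_{n,1}$ is a Hardy weight for $-\Delta_{0}$. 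This order-preserving correspondence between Hardy weights of $-\Delta_{a}$ and of $-\Delta_{0}$ is the single tool behind both parts of the proof.

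For the inequality, I would fix $q\in(0,q_{a}]\subset(0,1/2]$ and apply Corollary~\ref{cor:opt_hardy_n^q}, giving $-\Delta_{0}\geq\diag(w_{1}(q),w_{2}(q),\dots)$ with $w_{n}(q)=2-(1-1/n)^{q}-(1+1/n)^{q}$. Subtracting $aP_{1}$ from both sides yields $-\Delta_{a}\geq W$ with $w_{n}=w_{n}(q)-a\delta_{n,1}$, exactly as claimed. It only remains to confirm $W\geq0$, as demanded of a Hardy weight: for $n\geq2$ one has $w_{n}=w_{n}(q)>0$ by the convergent series in Corollary~\ref{cor:opt_hardy_n^q}, whereas $w_{1}=2-2^{q}-a\geq0$ precisely when $2^{q}\leq2-a$, i.e. $q\leq\log_{2}(2-a)$. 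Intersecting this with the constraint $q\leq1/2$ inherited from the Corollary reproduces the admissible range $q\leq q_{a}$ of~\eqref{eq:def_q_a}.

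The optimality then descends directly. Let $\tilde{w}\geq0$ be any Hardy weight for $-\Delta_{a}$ with $\tilde{w}\geq w$ pointwise, and set $\hat{w}_{n}:=\tilde{w}_{n}+a\delta_{n,1}$. By the correspondence above, $\hat{w}$ is a Hardy weight for $-\Delta_{0}$; it is non-negative since $\tilde{w}\geq0$ and $a\geq0$, and it dominates $w(q)$ because $\hat{w}_{n}\geq w_{n}+a\delta_{n,1}=w_{n}(q)$. As $w(q)$ is an optimal Hardy weight for $-\Delta_{0}$ in the sense (opt1)---established in Corollary~\ref{cor:opt_hardy_n^q} for $q\in(0,1/2]$---the bound $\hat{w}\geq w(q)$ forces $\hat{w}=w(q)$, hence $\tilde{w}_{n}=w_{n}(q)-a\delta_{n,1}=w_{n}$. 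This is precisely (opt1) for $-\Delta_{a}$.

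I do not expect a substantial obstacle: once the rank-one identity~\eqref{eq:delta_a_rank-one_perturb} is read as an order-preserving bijection of Hardy weights, both validity and optimality are inherited from the Dirichlet result. The only place needing care is the non-negativity of the first entry $w_{1}=2-2^{q}-a$, which is exactly what singles out the endpoint $q_{a}$ and explains the extra truncation at $\log_{2}(2-a)$ in~\eqref{eq:def_q_a}.
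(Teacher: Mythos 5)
Your proposal is correct and follows exactly the route the paper intends: the paper states Theorem~\ref{thm:hardy_ineq_aq} without a written proof, indicating only that it follows by combining Corollary~\ref{cor:opt_hardy_n^q} with the rank-one identity~\eqref{eq:delta_a_rank-one_perturb}, with $q_{a}$ chosen precisely so that $w_{1}=2-2^{q}-a\geq0$. Your write-up supplies those details faithfully, including the order-preserving correspondence of Hardy weights that transfers both the inequality and the (opt1) optimality from $-\Delta_{0}$ to $-\Delta_{a}$.
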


\section{The spectral stability of $-\Delta_{a}$ perturbed by small complex potentials}
\label{sec:spec_stability}

In this section, we provide sufficient conditions on the complex potential $V$ that guarantee emptiness of either the point or the discrete spectrum of $J_{a}+V$ for $a\in[0,1)$.

As an auxiliary result, we need to maximize the function
\begin{equation}
 g_{m}(\theta;a):=\frac{\left|\sin(m\theta)-a\sin((m-1)\theta)\right|}{|\sin(\theta)|\,\sqrt{1+a^{2}-2a\cos\theta}},
\label{eq:def_g_m}
\end{equation}
for $\theta\in[-\pi,\pi]$, where $a\in[0,1)$ and $m\in\N$ are parameters.
The value $g_m(0,a)$ is conventionally defined
as the continuous extension of $\theta \mapsto g_m(\theta,a)$ 
initially defined on $[-\pi,0) \cup (0,\pi]$.

\begin{lem}\label{lem:maximum_of_g_m}
For any $a\in[0,1)$ and $m\in\N$, one has
\[
 \max_{\theta\in[-\pi,\pi]}g_{m}(\theta;a)=g_{m}(0;a)=\frac{m-a(m-1)}{1-a}.
\]
\end{lem}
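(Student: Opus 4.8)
The plan is to recast $g_m(\theta;a)$ so that the awkward square root in the denominator of \eqref{eq:def_g_m} cancels against a polar factor in the numerator, leaving a single oscillating phase; then the whole maximization collapses to a one-variable monotonicity statement at critical points. First I would write the numerator as an imaginary part,
\[
\sin(m\theta)-a\sin((m-1)\theta)=\Im\!\left[e^{\ii(m-1)\theta}\bigl(e^{\ii\theta}-a\bigr)\right],
\]
and set $e^{\ii\theta}-a=\rho(\theta)\,e^{\ii\alpha(\theta)}$ with $\rho(\theta)=|e^{\ii\theta}-a|=\sqrt{1+a^{2}-2a\cos\theta}$ and $\alpha(\theta)=\arg(e^{\ii\theta}-a)$. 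This gives
\[
\sin(m\theta)-a\sin((m-1)\theta)=\rho(\theta)\,\sin\phi(\theta),\qquad \phi(\theta):=(m-1)\theta+\alpha(\theta),
\]
and since $\rho(\theta)$ is exactly the denominator's square root, it cancels; using evenness to restrict to $\theta\in(0,\pi)$ one obtains the clean formula $g_{m}(\theta;a)=|\sin\phi(\theta)|/\sin\theta$.

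Second, I would record the properties of the phase. Differentiating,
\[
\phi'(\theta)=(m-1)+\frac{1-a\cos\theta}{1+a^{2}-2a\cos\theta},
\]
which is strictly positive, satisfies $\phi(0)=0$ and $\phi'(0)=\tfrac{m-a(m-1)}{1-a}=:C$, and — the one genuine computation — is \emph{decreasing} on $[0,\pi]$, so that $\phi$ is concave. Indeed, with $c=\cos\theta$ the map $c\mapsto(1-ac)/(1+a^{2}-2ac)$ has derivative $a(1-a^{2})/(1+a^{2}-2ac)^{2}>0$, hence it increases in $c$ and therefore decreases in $\theta$. Concavity yields $\phi'(\theta)\le\phi'(0)=C$ for all $\theta$, and one checks $C\ge1$ (in fact $C\ge m$).

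Third comes the maximization. The function $g_{m}(\cdot;a)$ is continuous on $(0,\pi)$ with finite one-sided limits $g_{m}(0^{+};a)=\phi'(0)=C$ and $g_{m}(\pi^{-};a)=\phi'(\pi)=\tfrac{m+a(m-1)}{1+a}\le C$, so its maximum is attained at an endpoint or an interior critical point. At a critical point with $\cos\phi,\cos\theta\neq0$ the stationarity condition $\phi'\cos\phi\,\sin\theta=\sin\phi\,\cos\theta$ reads $\tan\phi=\phi'\tan\theta$; substituting $\sin^{2}\phi=\phi'^{2}\tan^{2}\theta/(1+\phi'^{2}\tan^{2}\theta)$ and $\sin^{2}\theta=\tan^{2}\theta/(1+\tan^{2}\theta)$ gives, with $t:=\tan^{2}\theta$ and $p:=\phi'(\theta)$,
\[
g_{m}(\theta;a)^{2}=p^{2}\,\frac{1+t}{1+p^{2}t}.
\]
As a function of $t\ge0$ this is monotone (increasing if $p\le1$, decreasing if $p\ge1$), hence bounded by $\max(p^{2},1)\le C^{2}$, using $p\le C$ and $C\ge1$. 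The degenerate cases $\cos\phi=0$ or $\cos\theta=0$ both force $\theta=\pi/2$, where $g_{m}=1\le C$. Thus every critical value and both endpoint limits are $\le C$, while the value $C$ is realized at $\theta=0$, proving $\max_{\theta}g_{m}(\theta;a)=g_{m}(0;a)=C$.

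The main obstacle is conceptual rather than computational: seeing that after cancelling $\rho(\theta)$ the problem reduces to the single concave phase $\phi$, and that the constrained value at a critical point is governed by the monotone one-parameter expression $p^{2}(1+t)/(1+p^{2}t)$. The concavity of $\phi$ (equivalently, monotonicity of $\phi'$) is the only place where the hypothesis $a\in[0,1)$ is essentially used, and it is the step I would verify most carefully.
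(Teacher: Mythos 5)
Your proof is correct, but it follows a genuinely different route from the paper's. The paper never reparametrizes: it expands the numerator as $\sin(m\theta)[1-a\cos\theta]+a\sin\theta\cos(m\theta)$, invokes the elementary bound $|\sin(m\theta)|\leq m\sin\theta$ (proved by induction), and then reduces the claim to the purely algebraic inequality $\frac{m(1-a\cos\theta)+a}{m(1-a)+a}\leq\frac{\sqrt{1+a^2-2a\cos\theta}}{1-a}$, which it verifies via monotonicity in $m$ and the observation that $1-a\cos\theta\leq\sqrt{1+a^2-2a\cos\theta}$ is equivalent to $a^2\cos^2\theta\leq a^2$. Your polar decomposition $e^{\ii\theta}-a=\rho(\theta)e^{\ii\alpha(\theta)}$, which cancels the square root and reduces everything to $|\sin\phi(\theta)|/\sin\theta$ with the concave phase $\phi=(m-1)\theta+\alpha$, is a nice structural insight the paper does not exploit; the price is the extra calculus (endpoint limits via $\phi(0)=0$, $\phi(\pi)=m\pi$, the critical-point identity $g_m^2=p^2(1+t)/(1+p^2t)$, and the degenerate cases at $\theta=\pi/2$), all of which you handle correctly. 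The paper's argument is shorter and avoids any discussion of critical points, while yours makes transparent \emph{why} the maximum sits at $\theta=0$ (the phase velocity $\phi'$ is maximal there) and isolates the hypothesis $a\in[0,1)$ in a single monotonicity statement for $\phi'$; both are valid proofs of the lemma.
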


\begin{proof}
Notice the function $g_{m}(\,\cdot\,;a)$ is even, hence the analysis can be restricted to $[0,\pi]$.
We will need the following two inequalities. 

First, for all $\theta \in (0,\pi)$ and $m\in\N$, we have
\begin{equation}
  \frac{|\sin(m\theta)|}{\sin(\theta)}\leq m.
  \label{eq:auxineq1_inproof}
\end{equation}
Inequality~\eqref{eq:auxineq1_inproof} can be proven by induction in $m\in\N$. The induction step makes use of the inequality
\begin{align*}
 \frac{|\sin((m+1)\theta)|}{\sin(\theta)} &= \frac{|\sin(m\theta)\cos(\theta)+\cos(m\theta)\sin(\theta)|}{\sin(\theta)}
  \leq \frac{|\sin(m\theta)|}{\sin(\theta)} |\cos(\theta)|
  +|\cos(m\theta)|\\
  &\leq \frac{|\sin(m\theta)|}{\sin(\theta)} + 1,
\end{align*}
which holds true for all $\theta\in(0,\pi)$.

Second, for all $\theta\in(0,\pi)$, $a\in[0,1)$ and $m\in\N$, we have
\begin{equation}
  f_m(\theta;a):=
  \frac{m(1-a\cos\theta)+a}{m(1-a)+a}
  \leq \frac{1-a\cos\theta}{1-a}
  = \lim_{m\to\infty} f_m(\theta;a).
  \label{eq:auxineq2_inproof}
\end{equation}
This is true since, for $\theta\in(0,\pi)$ and $a\in[0,1)$ fixed, $f_m(\theta;a)$ is increasing in $m>0$. Indeed, the differentiation yields
\[
  \frac{\partial f_m(\theta;a)}{\partial m} = \frac{a^{2}(1-\cos\theta)}{[m(1-a)+a]^2} > 0.
\]

Now, we can prove the statement of the lemma. With the aid of~\eqref{eq:auxineq1_inproof}, for $\theta\in(0,\pi)$ and $a\in[0,1)$, we obtain 
\[
   g_{m}(\theta;a)
   =\frac{\left|\sin(m\theta) \, [1-a\cos(\theta)]+a \sin(\theta) \cos(m\theta)\right|}{\sin(\theta)\sqrt{1+a^{2}-2a\cos(\theta)}}
   \leq \frac{m \, [1-a\cos(\theta)]+ a }{\sqrt{1+a^{2}-2a\cos(\theta)}}.
\]
It follows that the statement of the lemma holds provided that 
\[
  \frac{m \, [1-a\cos(\theta)] + a }{m (1- a) + a} 
  \leq \frac{\sqrt{1+a^{2}-2a\cos(\theta)}}{1-a},
\]
which, in turn, is true if 
\[
  1-a\cos(\theta)  \leq \sqrt{1+a^{2}-2a\cos(\theta)} 
\]
by~\eqref{eq:auxineq2_inproof}.
By squaring, the last inequality is equivalent to $a^{2} \cos^2(\theta) \leq a^2$, which is always true. 
\end{proof} 

Now, we are in a position to prove the theorem on a spectral stability of the discrete Robin Schr{\" o}dinger operator on $\N$. Recall that, by Theorem~\ref{thm:spectrum_J_a}, $\sigma(J_{a})=[-2,2]$ for $a\in[0,1)$.

\begin{thm}\label{thm:cond_empty_point_spec_complex}
Let $a\in[0,1)$, $v=\{v_{n}\}_{n=1}^{\infty}$ be a complex sequence, and $V=\diag(v_{1},v_{2},\dots)$. If the semi-infinite matrix $K'$ with elements
\[
 K'_{m,n}:=\sqrt{|v_{m}|}\left[\frac{a}{1-a}+\min(m,n)\right] \sqrt{|v_{n}|}, \quad m,n\in\N,
\] 
regarded as an operator on~$\ell^{2}(\N)$, satisfies $\|K'\|<1$, then $\sigma(J_{a}+V)=\sigma_\mathrm{c}(J_{a}+V)=\sigma(J_{a})$. 
Equivalently, if there exists a constant $c<1$ such that 
\begin{equation}
 |V|\leq c\left(-\Delta_{a}\right),
\label{eq:cond_func_form}
\end{equation}
we have $\sigma(J_{a}+V)=\sigma_\mathrm{c}(J_{a}+V)=\sigma(J_{a})$.
\end{thm}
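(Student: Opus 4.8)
The plan is to prove the equivalence of the two conditions first, and then derive the spectral stability from them; throughout we use $a\in[0,1)$, which guarantees $-\Delta_{a}\geq 0$ with $0=\min\sigma(-\Delta_{a})$. The weight $\frac{a}{1-a}+\min(m,n)$ is not accidental: letting $k\to 1^{-}$ (i.e.\ $z\to 2$) in the Green kernel~\eqref{eq:green_kernel_Ja} one obtains the threshold Green function $G^{(a)}_{m,n}:=\frac{a}{1-a}+\min(m,n)$, and a direct computation based on the rank-one structure~\eqref{eq:delta_a_rank-one_perturb} confirms $(-\Delta_{a})G^{(a)}=I$ entrywise, so that $K'=|V|^{1/2}(-\Delta_{a})^{-1}|V|^{1/2}$, the kernel of $(-\Delta_{a})^{-1}$ being $G^{(a)}$. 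With this identification, the equivalence $\|K'\|<1\Leftrightarrow(\exists\,c<1)\ |V|\leq c(-\Delta_{a})$ is the standard form-boundedness dictionary: for finitely supported $u$ the ratio $\langle u,|V|u\rangle/\langle u,-\Delta_{a}u\rangle$ has supremum $\||V|^{1/2}(-\Delta_{a})^{-1/2}\|^{2}=\||V|^{1/2}(-\Delta_{a})^{-1}|V|^{1/2}\|=\|K'\|$, so the best constant in~\eqref{eq:cond_func_form} equals $\|K'\|$.

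The core analytic step is the entrywise resolvent bound
\[
 \bigl|(J_{a}-z)^{-1}_{m,n}\bigr|\leq G^{(a)}_{m,n}=\frac{a}{1-a}+\min(m,n),\qquad z\in\C\setminus[-2,2],\ m,n\in\N,
\]
asserting that the modulus of the Green kernel is largest at the spectral threshold. To prove it I would fix $m\leq n$ and regard the right-hand side of~\eqref{eq:green_kernel_Ja} as a function $F$ of $k$ on $\overline{\D}$. Writing $\frac{1}{k^{-1}-k}=\frac{k}{1-k^{2}}$ exposes apparent poles at $k=\pm1$, but the factor $1-\frac{k-a}{1-ak}k^{2m-1}$ vanishes at both points, cancelling these simple poles (the remaining pole $k=1/a$ lies outside $\overline{\D}$ since $a<1$), so $F$ is analytic across $\overline{\D}$. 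By the maximum modulus principle it suffices to bound $|F|$ on $|k|=1$; setting $k=e^{\ii\theta}$ and simplifying shows $|F(e^{\ii\theta})|=g_{m}(\theta;a)$, the very function from~\eqref{eq:def_g_m}, whence Lemma~\ref{lem:maximum_of_g_m} gives $|F(k)|\leq g_{m}(0;a)=\frac{m-a(m-1)}{1-a}=G^{(a)}_{m,n}$. I expect this to be the main obstacle, and it is precisely where Lemma~\ref{lem:maximum_of_g_m} is used.

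Granting the bound, the Birman--Schwinger operator $K(z)=|V|^{1/2}(J_{a}-z)^{-1}|V|^{1/2}\sgn V$ is dominated entrywise in modulus by the nonnegative-kernel operator $K'$, so $\|K(z)\|\leq\|K'\|<1$ for every $z$ (including the boundary values on $[-2,2]$). The Birman--Schwinger principle invoked in the proof of Theorem~\ref{thm:spectral_enclosure_ell1_pot} then yields $z\notin\sigma_{\mathrm{p}}(J_{a}+V)$ for all $z\in\C$, i.e.\ $\sigma_{\mathrm{p}}(J_{a}+V)=\emptyset$. Since $a$ is real, $(J_{a}+V)^{*}=J_{a}+\overline{V}$, whose associated matrix $K'$ is unchanged (it depends only on $|v_{n}|$); hence $\sigma_{\mathrm{p}}((J_{a}+V)^{*})=\emptyset$, which forces $\sigma_{\mathrm{r}}(J_{a}+V)=\emptyset$.

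It remains to fix the spectrum as a set. Testing $\|K'\|<1$ against $u=(\sqrt{|v_{1}|},\dots,\sqrt{|v_{N}|},0,\dots)$ and using $\min(m,n)\geq1$ gives $\sum_{n\leq N}|v_{n}|\leq\|K'\|<1$, so $v\in\ell^{1}(\N)$ and $V$ is trace class, in particular compact. By Weyl's theorem $\sigma_{\mathrm{ess}}(J_{a}+V)=\sigma_{\mathrm{ess}}(J_{a})=[-2,2]$, so $[-2,2]\subseteq\sigma(J_{a}+V)$; on the other hand each point of $\sigma(J_{a}+V)\setminus[-2,2]$ would be an isolated eigenvalue of finite algebraic multiplicity, contradicting $\sigma_{\mathrm{p}}(J_{a}+V)=\emptyset$. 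Therefore $\sigma(J_{a}+V)=[-2,2]=\sigma(J_{a})$, and as the point and residual parts vanish this spectrum is purely continuous, which is the assertion.
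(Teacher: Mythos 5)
Your proposal is correct and follows essentially the same route as the paper: the key entrywise bound $|(J_{a}-z)^{-1}_{m,n}|\leq\frac{a}{1-a}+\min(m,n)$ via the maximum modulus principle and Lemma~\ref{lem:maximum_of_g_m}, the resulting domination $\|K(z)\|\leq\|K'\|$ of the Birman--Schwinger operator, and the identification of $\|K'\|$ with $\||V|^{1/2}(-\Delta_{a})^{-1/2}\|^{2}$ for the equivalence of the two hypotheses. The only cosmetic difference is that you unpack the final spectral-stability conclusion by hand (adjoint argument for the residual spectrum, $v\in\ell^{1}$ and Weyl's theorem for the essential spectrum), whereas the paper delegates exactly this step to the abstract Birman--Schwinger result of Hansmann--Krej\v{c}i\v{r}\'ik.
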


\begin{proof}
Let $a\in[0,1)$. For all $m,n\in\N$, we prove that
\begin{equation}
 \sup_{z\in\C}\left|(J_{a}-z)_{m,n}^{-1}\right|=\left|(J_{a}-2)_{m,n}^{-1}\right|=\frac{a}{1-a}+\min(m,n).
\label{eq:green_func_J_a_bound}
\end{equation}
The verification of~\eqref{eq:green_func_J_a_bound} is postponed to the end of the proof.
Having~\eqref{eq:green_func_J_a_bound}, we may proceed similarly as in the proof of Theorem~\ref{thm:spectral_enclosure_ell1_pot} and estimate the norm of the Birman--Schwinger operator this time as follows:
\begin{align*}
 \left|\left\langle\phi,K(z)\psi\right\rangle\right|&\leq\sum_{m,n=1}^{\infty}|\phi_{n}|\sqrt{|v_{m}|}\left|\left(J_{a}-z\right)_{m,n}^{-1}\right|\sqrt{|v_{n}|}|\psi_{n}|\\
 &\leq\sum_{m,n=1}^{\infty}|\phi_{n}|\sqrt{|v_{m}|}\left[\frac{a}{1-a}+\min(m,n)\right]\sqrt{|v_{n}|}|\psi_{n}|\\
 &=(|\phi|,K'|\psi|)
\end{align*}
for any $\phi,\psi\in\ell^{2}(\N)$ and $z\in\C$. It follows that $\|K(z)\|\leq\|K'\|$ for all $z\in\C$. Hence, if $\|K'\|<1$, the spectral stability follows by \cite[Thm.~3]{Hansmann-Krejcirik}.

Next, we prove the equivalence between the inequality $\|K'\|<1$ and~\eqref{eq:cond_func_form}. The operator inequality~\eqref{eq:cond_func_form} can be written as
\[
 \langle\psi,|V|\psi\rangle\leq c\langle\psi,(2-J_{a})\psi\rangle,
\]
for all $\psi\in\ell^{2}(\N)$, which holds true if and only if
\[
 \left\||V|^{1/2}\psi\right\|^{2}\leq c  \left\|\left(2-J_{a}\right)^{1/2}\psi\right\|^{2},
\]
for all $\psi\in\ell^{2}(\N)$. Yet another equivalent form of the inequality reads
\[
\left\||V|^{1/2}(2-J_{a})^{-1/2}\phi\right\|^{2}\leq c \, \|\phi\|^{2},
\]
for all $\phi\in\Ran(2-J_{a})^{1/2}$. Since $2\in\sigma_{c}(J_{a})$, see Theorem~\ref{thm:spectrum_J_a}, the range of $(2-J_{a})^{1/2}$ is dense in $\ell^{2}(\N_{0})$. This means that $|V|^{1/2}(2-J_{a})^{-1/2}$ extends to a bounded operator with the norm 
\[
\left\||V|^{1/2}(2-J_{a})^{-1/2}\right\|\leq\sqrt{c}.
\]
Finally, it suffices to note that
\[
 \|K'\|=\left\||V|^{1/2}(2-J_{a})^{-1}|V|^{1/2}\right\|=\left\||V|^{1/2}(2-J_{a})^{-1/2}\right\|^{2}\leq c<1,
\]
where we have used the fact that $\|TT^{*}\|=\|T\|^{2}$ for a bounded operator $T$.

It remains to verify~\eqref{eq:green_func_J_a_bound}. By inspection of formula~\eqref{eq:green_kernel_Ja}, one observes that the Green kernel $(J_{a}-k-k^{-1})_{m,n}^{-1}$ is an analytic function in $k$ in the open unit disk and continuous to its boundary. Indeed, the fact that the Green kernel extends continuously also to the points $k=\pm1$ follows from~\eqref{eq:green_kernel_Ja} and limit relations
\[
\frac{(k-a)k^{m+n}-(1-ak)k^{n-m+1}}{(1-ak)(1-k^{2})}\to
\begin{cases}
-\frac{m-a(m-1)}{1-a}& \quad \mbox{ as }\,k\to1,\\[2pt]
(-1)^{m+n}\,\frac{m+a(m-1)}{1+a}& \quad \mbox{ as }\,k\to-1,
\end{cases}
\]
that can be verified by a straightforward computation. Thus, by the Maximum Modulus Principle, one has
\[
 \sup_{z\in\C}\left|(J_{a}-z)_{m,n}^{-1}\right|=\max_{|k|\leq1}\left|\left(J_{a}-k-k^{-1}\right)_{m,n}^{-1}\right|=\max_{|k|=1}\left|\left(J_{a}-k-k^{-1}\right)_{m,n}^{-1}\right|.
\]
By setting $k=e^{\ii\theta}$, for $\theta\in[-\pi,\pi]$, and using~\eqref{eq:green_kernel_Ja}, we get
\[
\left|\left(J_{a}-2\cos\phi\right)_{m,n}^{-1}\right|=\left|\frac{\left(e^{\ii\theta}-a\right)e^{\ii(m+n-1)\theta}-\left(e^{-\ii\theta}-a\right)e^{\ii(-m+n+1)\theta}}{\left(1-ae^{\ii\theta}\right)\left(e^{\ii\theta}-e^{-\ii\theta}\right)}\right|=g_{m}(\theta;a),
\]
if $m\leq n$, where the function $g_{m}(\,\cdot\,;a)$ is defined in~\eqref{eq:def_g_m}. By Lemma~\ref{lem:maximum_of_g_m}, we conclude that
\[
\max_{|k|=1}\left|\left(J_{a}-k-k^{-1}\right)_{m,n}^{-1}\right|=
g_{m}(0;a)=\frac{a}{1-a}+m,
\]
if $1\leq m\leq n$. Equation~\eqref{eq:green_func_J_a_bound} now follows from the symmetry of the Green kernel in~$m$ and~$n$.
\end{proof}

\begin{rem} 
The stability of the spectrum of $J_a + V$
under the condition $\|K'\| < 1$ goes back to 
an old smoothness idea of Kato's \cite{Kato_1966}. 
In fact, it follows by his \cite[Thm.~1.5]{Kato_1966}
that $J_a + V$ is similar to $J_a$ under the condition $\|K'\| < 1$.
The stability of the spectrum of $J_a + V$
under the subordination condition~\eqref{eq:cond_func_form}   
goes back to the idea of~\cite[Thm.~1]{FKV} in a continuous setting.
The equivalence between the two conditions 
was realized in \cite{Hansmann-Krejcirik},
to where we refer for a survey, further abstract developments 
and applications.
\end{rem}

\begin{rem}\label{rem:stability_negative_a}
The claim of Theorem~\ref{thm:cond_empty_point_spec_complex} can be readily generalized to $a\in(-1,1)$. In fact, the condition $\|K'\|<1$ implies $\sigma(J_{a}+V)=\sigma_\mathrm{c}(J_{a}+V)=\sigma(J_{a})$ also for $a\in(-1,0)$ because $\sigma(J_{-a}+V)=-\sigma(J_{a}-V)$ and similarly for each part of the spectra, which follows from Proposition~\ref{prop:unitary}, and the operator $K'$ is defined in terms of the absolute value $|V|$. 
On the other hand, condition~\eqref{eq:cond_func_form} has to be replaced by $c(4+\Delta_{a})\geq|V|$ if $a\in(-1,0)$, which is again a~consequence of Proposition~\ref{prop:unitary}.
\end{rem}

\begin{cor}\label{Corol1}
Suppose $a\in(-1,1)$. If the potential $V=\diag(v_{1},v_{2},\dots)$ satisfies
\begin{equation}
 \sum_{m,n=1}^{\infty}|v_{m}|\left[\frac{a}{1-a}+\min(m,n)\right]^{2}|v_{n}|<1,
\label{eq:cond_empty_point_spec_complex}
\end{equation}
or even the stricter condition 
\begin{equation}\label{weighted}
 \sum_{n=1}^{\infty} \left(\frac{a}{1-a}+n^2\right)|v_{n}|<1 \,,
\end{equation}
then $\sigma(J_{a}+V)=\sigma_\mathrm{c}(J_{a}+V)=\sigma(J_{a})$. 
\end{cor}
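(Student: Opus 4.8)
The plan is to deduce both statements from the norm condition $\|K'\|<1$ of Theorem~\ref{thm:cond_empty_point_spec_complex}: once $\|K'\|<1$ is established, that theorem (for $a\in[0,1)$), together with Remark~\ref{rem:stability_negative_a} and the duality of Proposition~\ref{prop:unitary} (for $a\in(-1,0)$), yields $\sigma(J_a+V)=\sigma_{\mathrm c}(J_a+V)=\sigma(J_a)$. So everything reduces to bounding the operator norm of $K'$. I would write $\beta:=\frac{a}{1-a}$ and first note that, for every $a\in(-1,1)$, one has $\beta>-\tfrac12$, so that $\beta+\min(m,n)\ge\beta+1>\tfrac12>0$ and, similarly, $\beta+n^2>0$ for all $m,n\in\N$; this positivity will be used repeatedly.

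For the first condition~\eqref{eq:cond_empty_point_spec_complex}, the key observation is that its left-hand side is precisely the squared Hilbert--Schmidt norm of $K'$. Indeed, since $K'_{m,n}=\sqrt{|v_m|}\,[\beta+\min(m,n)]\,\sqrt{|v_n|}$ is real, one has
\[
\|K'\|^2\le\|K'\|_{\mathrm{HS}}^2=\sum_{m,n=1}^{\infty}|v_m|\,[\beta+\min(m,n)]^2\,|v_n|.
\]
Hence~\eqref{eq:cond_empty_point_spec_complex} says exactly that $\|K'\|_{\mathrm{HS}}<1$, which forces $\|K'\|<1$, and the conclusion follows from the reduction above.

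It remains to show that the stricter single-sum condition~\eqref{weighted} implies~\eqref{eq:cond_empty_point_spec_complex}. The step I expect to carry the (modest) weight of the argument is the pointwise inequality
\[
[\beta+\min(m,n)]^2\le(\beta+m^2)(\beta+n^2),\qquad m,n\in\N.
\]
Taking $m\le n$ without loss of generality, I would prove it through the chain $(\beta+m)^2\le(\beta+m^2)^2\le(\beta+m^2)(\beta+n^2)$, the first inequality using $0<\beta+m\le\beta+m^2$ (as $m\le m^2$) and the second using $0<\beta+m^2\le\beta+n^2$, all factors being positive by the remark above. Summing against $|v_m||v_n|$ and factorising then gives
\[
\sum_{m,n=1}^{\infty}|v_m|\,[\beta+\min(m,n)]^2\,|v_n|\le\Big(\sum_{n=1}^{\infty}(\beta+n^2)|v_n|\Big)^{2},
\]
and since the right-hand side is the square of a quantity in $[0,1)$ by~\eqref{weighted}, it is $<1$; thus~\eqref{eq:cond_empty_point_spec_complex} holds. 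The only genuinely delicate point throughout is the sign bookkeeping for $a\in(-1,0)$: one must check that the bracketed factors stay positive (guaranteed by $\beta>-\tfrac12$) and appeal to Remark~\ref{rem:stability_negative_a} rather than Theorem~\ref{thm:cond_empty_point_spec_complex} directly.
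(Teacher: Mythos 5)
Your proposal is correct and follows essentially the same route as the paper: identify the left-hand side of \eqref{eq:cond_empty_point_spec_complex} with $\|K'\|_{\mathrm{HS}}^2$, conclude $\|K'\|\leq\|K'\|_{\mathrm{HS}}<1$ and invoke Theorem~\ref{thm:cond_empty_point_spec_complex} together with Remark~\ref{rem:stability_negative_a}, and then reduce \eqref{weighted} to \eqref{eq:cond_empty_point_spec_complex} via the pointwise inequality $(\alpha+\min(m,n))^2\leq(\alpha+m^2)(\alpha+n^2)$. Your only deviation is a welcome one: the paper states that auxiliary inequality only for $\alpha\geq 0$, whereas for $a\in(-1,0)$ one needs it for $\alpha=a/(1-a)\in(-1/2,0)$, and your explicit chain using $\alpha+1>0$ covers this case.
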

\begin{proof}
First, note that the left-hand side of~\eqref{eq:cond_empty_point_spec_complex} coincides with the square of the Hilbert--Schmidt norm of the operator $K'$ from Theorem~\ref{thm:cond_empty_point_spec_complex}. Then~\eqref{eq:cond_empty_point_spec_complex} implies 
$\|K'\|\leq\|K'\|_\mathrm{HS}<1$ and hence the claim follows from Theorem~\ref{thm:cond_empty_point_spec_complex} and Remark~\ref{rem:stability_negative_a}. Second, it suffices to note that \eqref{weighted} implies \eqref{eq:cond_empty_point_spec_complex} which is a consequence of the inequality $\left(\alpha+\min(m,n)\right)^{2}\leq (\alpha+m^2)(\alpha+n^2)$ that holds true for all $m,n\in\N$ and $\alpha\geq0$.
\end{proof}

Note that the subordination of the sufficient conditions
of Theorem~\ref{thm:cond_empty_point_spec_complex}
and Corollary~\ref{Corol1}
is as follows:
$
  \eqref{eq:cond_func_form} 
  \, \Leftarrow\,
  \eqref{eq:cond_empty_point_spec_complex}
  \, \Leftarrow\,
  \eqref{weighted}
$.

\begin{rem}\label{Rem.continuous}
It is interesting to compare Theorem~\ref{thm:cond_empty_point_spec_complex}
with its continuous analogue.
Since the latter is not available in 
\cite{Frank-Laptev-Seiringer_2011} (nor~\cite{Enblom_2017}),
we establish the result here.
Alternative conditions established by a completely different technique
(including higher dimensions)
can be found in~\cite{CK2}.

Let~$H_\alpha$ denote the Laplacian in $L^2((0,\infty))$,
subject to the Robin boundary condition $\psi'(0) = \alpha \psi(0)$ with $\alpha \in \C$.
By convention, the case $\alpha=+\infty$ is included as the Dirichlet Laplacian.
More specifically, 
$
  \Dom(H_\alpha) := \{\psi \in W^{2,2}((0,\infty))\,|\, \psi'(0) = \alpha \psi(0)\}
$
if $\alpha \in \C$ and 
$
  \Dom(H_\alpha) := \{\psi \in W^{2,2}((0,\infty))\,|\, \psi(0) = 0\}
$
if $\alpha = +\infty$.
For every $z \in \C \setminus [0,+\infty)$,
the resolvent $(H_\alpha-z)^{-1}$ is the integral operator with kernel
$$
  G_z(x,x') := \frac{e^{-\sqrt{-z}\,|x-x'|} - e^{-\sqrt{-z}\,|x+x'|}}
  {2\sqrt{-z}} + \frac{e^{-\sqrt{-z}\,|x+x'|}}{\sqrt{-z}+\alpha}
  \,,
$$
where we consider the principal branch of the square root.

In analogy with Theorem~\ref{thm:cond_empty_point_spec_complex},
let us now restrict to real $\alpha \in (0,+\infty]$.
Then the spectrum of~$H_\alpha$ equals $[0,+\infty)$
and it is purely continuous. 
Let $V \in L^1_\mathrm{loc}((0,\infty))$ 
be relatively form bounded with respect to~$H_\alpha$
with the relative bound less than one.  
Define $H_V := H_\alpha \dot{+} V$, where the sum on the righ-hand side
should be interpreted in the sense of forms.
 
It is not difficult to see that the pointwise inequality
$$
  |G_z(x,x')| \leq |G_0(x,x')| 
  = \frac{1}{\alpha} + \frac{\big||x-x'|-|x+x'|\big|}{2} 
  = \frac{1}{\alpha} + \min(x,x')
$$
holds true for every $z \in \C \setminus [0,+\infty)$
and $x,x' \in (0,\infty)$.
Applying the Birman--Schwinger principle
\cite[Thm.~3]{Hansmann-Krejcirik},
the spectral stability 
$$
  \sigma(H_V) = \sigma_\mathrm{c}(H_V) = \sigma(H_0)
$$
holds true 
(in particular, the point spectrum is empty)
whenever the integral operator~$K'$ with kernel
$$
  |V(x)|^{1/2} \, \left(\frac{1}{\alpha} + \min(x,x')\right) \, |V(x')|^{1/2}
$$
has norm strictly less than one, 
or equivalently, there exists a constant $c<1$ such that
\begin{equation}\label{continuous1}
  \int_0^\infty V(x) \, |u(x)|^2 \, \diff x
  \leq c \left(
  \int_0^\infty |u'(x)|^2 \, \diff x
  + \alpha \, |u(0)|^2
  \right)
\end{equation}
for every $u \in W^{1,2}((0,\infty))$ if $\alpha>0$
or $u \in W_0^{1,2}((0,\infty))$ if $\alpha=+\infty$
(the term $\alpha \, |u(0)|^2$ is interpreted as zero in the latter case).
Note that this subordination condition particularly ensures
that~$V$ is relatively form bounded with respect to~$H_\alpha$
with the relative bound less than one.
Estimating the operator norm of~$K'$
by the Hilbert--Schmidt norm,
a sufficient condition reads
\begin{equation}\label{continuous2}
  \iint\displaylimits_{(0,\infty)^2} 
  |V(x)| \ \left(\frac{1}{\alpha} + \min(x,x') \right)^2 \ |V(x')|
  \ \diff x \, \diff x'
  < 1
  \,.
\end{equation}
Since $\min(x,x')^2 \leq (1+x^2)(1+x'^2)$, for $x,x'\geq0$, an obvious sufficient condition to guarantee~\eqref{continuous2} reads
\begin{equation}\label{continuous3}
  \int_0^\infty |V(x)| \, \left[1+\left(\frac{1}{\alpha}+x\right)^{\!2}\right] \diff x < 1 
  \,.
\end{equation}
Obviously, the sufficient conditions 
\eqref{continuous1}, \eqref{continuous2} and~\eqref{continuous3}
are continuous analogues of 
\eqref{eq:cond_func_form}, 
\eqref{eq:cond_empty_point_spec_complex}
and \eqref{weighted}, respectively.
\end{rem}

Although the weaker assumption $\|K'\|\leq1$, where $K'$ is as in Theorem~\ref{thm:cond_empty_point_spec_complex}, 
does not guarantee $\sigma_\mathrm{p}(J_{a}+V)=\emptyset$, for $a\in[0,1)$, it follows at least that the discrete spectrum
$\sigma_\mathrm{d}(J_{a}+V)$ is empty. In other words, we still have $\sigma(J_{a}+V)=[-2,2]$ but the spectrum of $J_{a}+V$ need not be purely continuous, i.e., the existence of eigenvalues embedded in the interval $[-2,2]$ cannot be excluded.

\begin{thm}\label{thm:cond_empty_discr_spec_complex}
 Suppose that $a\in[0,1)$ and $\{v_{n}\}_{n=1}^{\infty}\subset\C$ is such that $\|K'\|\leq1$, where $K'$ is as in Theorem~\ref{thm:cond_empty_point_spec_complex}, then $\sigma_\mathrm{d}(J_{a}+V)=\emptyset$. Equivalently, if
 \begin{equation}
 |V|\leq-\Delta_{a},
\label{eq:cond_func_form_weaker}
\end{equation}
then $\sigma_\mathrm{d}(J_{a}+V)=\emptyset$.
\end{thm}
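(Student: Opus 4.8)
The plan is to reduce the claim to the Birman--Schwinger machinery already developed in the proof of Theorem~\ref{thm:cond_empty_point_spec_complex}, the only genuinely new ingredient being a rigidity (strict maximum modulus) argument that upgrades the \emph{non-strict} bound $\|K'\|\le1$ to the exclusion of eigenvalues off $[-2,2]$. First, the equivalence of $\|K'\|\le1$ with~\eqref{eq:cond_func_form_weaker} is verbatim the computation in the proof of Theorem~\ref{thm:cond_empty_point_spec_complex} taken with $c=1$, so it remains to show that $\|K'\|\le1$ forces $\sigma_\mathrm{d}(J_a+V)=\emptyset$. I would begin by identifying the essential spectrum: since $K'\ge0$, its diagonal entries satisfy $(K')_{n,n}=|v_n|\big(\tfrac{a}{1-a}+n\big)\le\|K'\|\le1$, whence $|v_n|\le\big(\tfrac{a}{1-a}+n\big)^{-1}\to0$. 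Thus $V$ is a compact diagonal operator and, by Weyl's theorem together with Theorem~\ref{thm:spectrum_J_a}, $\sigma_\mathrm{ess}(J_a+V)=\sigma_\mathrm{ess}(J_a)=[-2,2]$. Consequently every point of $\sigma_\mathrm{d}(J_a+V)$ is an eigenvalue lying in $\C\setminus[-2,2]$, and it suffices to prove that $J_a+V$ has no eigenvalue there.

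Fixing $z_0\in\C\setminus[-2,2]$, I write $z_0=k+k^{-1}$ with $0<|k|<1$ and suppose for contradiction that $z_0\in\sigma_\mathrm{p}(J_a+V)$. By the Birman--Schwinger principle \cite{Hansmann-Krejcirik} this is equivalent to $K(z_0)\psi=-\psi$ for some $\psi\neq0$. Running the estimate from the proof of Theorem~\ref{thm:cond_empty_point_spec_complex} with both arguments equal to $\psi$ yields the chain
\[
 \|\psi\|^2=\big|\langle\psi,K(z_0)\psi\rangle\big|\le\sum_{m,n=1}^\infty|\psi_m|\,\big|(K(z_0))_{m,n}\big|\,|\psi_n|\le(|\psi|,K'|\psi|)\le\|K'\|\,\|\psi\|^2\le\|\psi\|^2,
\]
so that every inequality is in fact an equality. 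In particular the middle, entrywise, estimate collapses to
\[
 \sum_{m,n=1}^\infty|\psi_m|\big[(K')_{m,n}-\big|(K(z_0))_{m,n}\big|\big]|\psi_n|=0,
\]
and since each summand is non-negative, each of them must vanish.

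The hard part will be to convert this rigidity into $\psi=0$: compared with Theorem~\ref{thm:cond_empty_point_spec_complex}, the main obstacle is precisely the borderline case $\|K'\|=1$, where no gap in the operator norm is available, so the required strictness must instead come from the interior of the disk. Here I would invoke the maximum modulus principle already used for~\eqref{eq:green_func_J_a_bound}: the diagonal Green function $k\mapsto(J_a-k-k^{-1})^{-1}_{n,n}$ is non-constant and analytic on $\D$ (for $a\in[0,1)$ its only pole $k=a^{-1}$ lies outside $\overline{\D}$), so for $0<|k|<1$ one has the strict inequality
\[
 \big|(J_a-z_0)^{-1}_{n,n}\big|<\max_{|k|=1}\big|(J_a-k-k^{-1})^{-1}_{n,n}\big|=\frac{a}{1-a}+n.
\]
Taking $m=n$ in the vanishing-summand condition, the bracket $(K')_{n,n}-|(K(z_0))_{n,n}|=|v_n|\big[\tfrac{a}{1-a}+n-|(J_a-z_0)^{-1}_{n,n}|\big]$ is then strictly positive whenever $v_n\neq0$, forcing $\psi_n=0$; and if $v_n=0$, the $n$-th row of $K(z_0)$ vanishes identically, so the eigenvalue relation gives $\psi_n=-\big(K(z_0)\psi\big)_n=0$ as well. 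Hence $\psi\equiv0$, a contradiction. This shows $\sigma_\mathrm{p}(J_a+V)\cap(\C\setminus[-2,2])=\emptyset$, which together with the first paragraph yields $\sigma_\mathrm{d}(J_a+V)=\emptyset$.
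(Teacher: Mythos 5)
Your proof is correct, but it follows a genuinely different route from the paper's. The paper's argument is a soft approximation: it scales the potential to $qV$ with $q\in(0,1)$, notes $\|K'_q\|=q\|K'\|<1$ so that Theorem~\ref{thm:cond_empty_point_spec_complex} kills the discrete spectrum of $J_a+qV$, and then lets $q\to1^-$, using the norm convergence $J_a+qV\to J_a+V$ together with the fact that an isolated finite-multiplicity eigenvalue of the limit operator would have to be accompanied by nearby spectrum of $J_a+qV$ for $q$ close to $1$. Your argument instead works directly at $\|K'\|=1$ via a rigidity/equality analysis: a Birman--Schwinger eigenvector $K(z_0)\psi=-\psi$ saturates the entire chain of estimates, and the \emph{strict} maximum modulus principle for the diagonal Green function $k\mapsto(J_a-k-k^{-1})^{-1}_{n,n}$ (holomorphic on $\D$, vanishing at $k=0$, hence non-constant, with no pole in $\overline{\D}$ since $a^{-1}>1$ for $a\in(0,1)$ and no pole at all for $a=0$) forces $\psi_n=0$ for every $n$ with $v_n\neq0$, while the eigenvalue relation itself gives $\psi_n=0$ when $v_n=0$. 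Each approach has its price: the paper's proof is shorter but leans on the perturbation theory of discrete eigenvalues (Riesz projections under norm convergence); yours is more self-contained at that level but requires you to establish compactness of $V$ (which you do correctly from the diagonal bound $(K')_{n,n}\le\|K'\|$ using positivity of $K'$), Weyl stability of the essential spectrum, and the strict form of the maximum modulus principle. Both yield exactly the stated conclusion $\sigma_\mathrm{d}(J_a+V)=\emptyset$, and your treatment of the equivalence with~\eqref{eq:cond_func_form_weaker} coincides with the paper's.
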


\begin{proof}
 Suppose $V=\diag(v_{1},v_{2},\dots)$ is such that $\|K'\|\leq1$. For $q\in(0,1)$, we define an auxiliary operator $K'_{q}$ corresponding to the potential $qV$, i.e., $K'_{q}=qK'$. Since 
 \[ 
 \|K'_{q}\|=q\|K'\|\leq q<1,
 \] 
 we have $\sigma_\mathrm{d}(J_{a}+qV)=\emptyset$ for all $q\in(0,1)$ by Theorem~\ref{thm:cond_empty_point_spec_complex}. 

 Clearly, $J_{a}+qV\to J_{a}+V$ uniformly as $q\to1-$. Consequently, if there exists $\lambda\in\sigma_\mathrm{d}(J_{a}+V)$, then there must be a discrete eigenvalue of $J_{a}+qV$ in a neighborhood of $\lambda$ for~$q$ sufficiently close to $1$ contradicting $\sigma_\mathrm{d}(J_{a}+qV)=\emptyset$. Therefore $\sigma_\mathrm{d}(J_{a}+V)=\emptyset$.

 The proof of the equivalence between the condition $\|K'\|\leq1$ and~\eqref{eq:cond_func_form_weaker} follows the same lines as in the proof of Theorem~\ref{thm:cond_empty_point_spec_complex}.
\end{proof}

\begin{rem}
Analogically as in Remark~\ref{rem:stability_negative_a}, the first statement of Theorem~\ref{thm:cond_empty_discr_spec_complex} holds true with no change even for $a\in(-1,0)$, while condition~\eqref{eq:cond_func_form_weaker} is to be replaced by the inequality $4+\Delta_{a}\geq|V|$ if $a\in(-1,0)$.
\end{rem}

The following statement can be deduced from Theorem~\ref{thm:cond_empty_discr_spec_complex} similarly as Corollary~\ref{Corol1} from Theorem~\ref{thm:cond_empty_point_spec_complex}.

\begin{cor}
If $a\in(-1,1)$ and potential $V=\diag(v_{1},v_{2},\dots)$ fulfills
\begin{equation}
 \sum_{m,n=1}^{\infty}|v_{m}|\left[\frac{a}{1-a}+\min(m,n)\right]^{2}|v_{n}|\leq1,
\label{eq:cond_double_sum_empty_discr_spec}
\end{equation}
or even
\[
 \sum_{n=0}^{\infty}\left(\frac{a}{1-a}+n^2\right)|v_{n}|\leq1,
\]
then $\sigma_\mathrm{d}(J_{a}+V)=\emptyset$.
\end{cor}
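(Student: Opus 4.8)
The plan is to transcribe the two-step argument used to derive Corollary~\ref{Corol1} from Theorem~\ref{thm:cond_empty_point_spec_complex}, replacing the strict inequality $<1$ by the non-strict $\leq 1$ and invoking Theorem~\ref{thm:cond_empty_discr_spec_complex} in its place. First I would note that the double sum on the left-hand side of~\eqref{eq:cond_double_sum_empty_discr_spec} is exactly the square of the Hilbert--Schmidt norm $\|K'\|_\mathrm{HS}^{2}$ of the operator $K'$ from Theorem~\ref{thm:cond_empty_point_spec_complex}, whose entries are $\sqrt{|v_{m}|}\,[a/(1-a)+\min(m,n)]\,\sqrt{|v_{n}|}$. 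Hence~\eqref{eq:cond_double_sum_empty_discr_spec} is precisely the statement $\|K'\|_\mathrm{HS}\leq 1$, and the standard bound $\|K'\|\leq\|K'\|_\mathrm{HS}$ yields $\|K'\|\leq 1$. Theorem~\ref{thm:cond_empty_discr_spec_complex}, together with the remark following it which extends its conclusion to $a\in(-1,0)$, then gives $\sigma_\mathrm{d}(J_{a}+V)=\emptyset$, settling the first assertion.

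Second, to pass from the stricter single-sum condition to~\eqref{eq:cond_double_sum_empty_discr_spec} I would reuse the elementary factorisation inequality from the proof of Corollary~\ref{Corol1}, namely $(\alpha+\min(m,n))^{2}\leq(\alpha+m^{2})(\alpha+n^{2})$, valid for all $m,n\in\N$ and every $\alpha\geq 0$. Applied with $\alpha=a/(1-a)$, which is non-negative because $a\in[0,1)$, it bounds the double sum by the product of two identical single sums $\big[\sum_{n}(a/(1-a)+n^{2})|v_{n}|\big]^{2}$, so that the stricter hypothesis implies~\eqref{eq:cond_double_sum_empty_discr_spec} and the conclusion follows from the first part. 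For $a\in(-1,0)$ the quantity $a/(1-a)$ is negative and the factorisation cannot be applied directly; there I would first reduce to a non-negative coupling by the duality of Proposition~\ref{prop:unitary} (as in the remark after Theorem~\ref{thm:cond_empty_discr_spec_complex}) and only then invoke the factorisation.

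I do not expect a genuine obstacle, since the argument is essentially a line-by-line copy of Corollary~\ref{Corol1}. The one point that deserves care --- and the only place where the present statement differs substantively --- is the boundary value of the norm: here the hypothesis is the non-strict $\|K'\|\leq 1$ rather than the strict $\|K'\|<1$. This is exactly the borderline regime that Theorem~\ref{thm:cond_empty_discr_spec_complex} was designed to cover, via the limiting argument $q\to 1-$ in its proof, so replacing Theorem~\ref{thm:cond_empty_point_spec_complex} by Theorem~\ref{thm:cond_empty_discr_spec_complex} is all that is required and no new estimate is needed.
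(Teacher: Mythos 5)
Your proposal is correct and takes essentially the same approach the paper intends: the paper gives no separate argument but states that this corollary ``can be deduced from Theorem~\ref{thm:cond_empty_discr_spec_complex} similarly as Corollary~\ref{Corol1} from Theorem~\ref{thm:cond_empty_point_spec_complex}'', which is precisely your identification of the double sum with $\|K'\|_{\mathrm{HS}}^{2}$, the bound $\|K'\|\leq\|K'\|_{\mathrm{HS}}\leq1$, and the factorisation inequality $(\alpha+\min(m,n))^{2}\leq(\alpha+m^{2})(\alpha+n^{2})$. Your additional care about the sign of $a/(1-a)$ when $a\in(-1,0)$ (handled via the duality of Proposition~\ref{prop:unitary}) is a point the paper glosses over, but it does not alter the argument.
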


Finally, we may deduce more concrete conditions on the potential $V$ guaranteeing the spectral stability of $J_{a}+V$ by combining Theorems~\ref{thm:cond_empty_point_spec_complex} and~\ref{thm:cond_empty_discr_spec_complex} with the Hardy weights given in Theorem~\ref{thm:hardy_ineq_aq}.

\begin{thm}\label{thm:cond_from_hardy_point_spec}
 Let $a\in[0,1)$. 
 If the complex potential $V=\diag(v_{1},v_{2},\dots)$ satisfies
 \begin{equation}
   |v_{n}|\leq c\left[2-\left(1-\frac{1}{n}\right)^{q}-\left(1+\frac{1}{n}\right)^{q}-a\delta_{n,1}\right],\quad   \forall n\in\N,
   \label{eq:cond_from_hardy_point_spec}
 \end{equation}
 for a constant $c<1$ and $q\in(0,q_{a}]$, where $q_{a}$ is defined by~\eqref{eq:def_q_a}, then 
 \[ 
 \sigma(J_{a}+V)=\sigma_\mathrm{c}(J_{a}+V)=[-2,2].
 \]
 If condition~\eqref{eq:cond_from_hardy_point_spec} is fulfilled with $c=1$, then $\sigma_\mathrm{d}(J_{a}+V)=\emptyset$.
 \end{thm}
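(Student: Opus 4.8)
The plan is to recognize that the bracketed expression on the right-hand side of~\eqref{eq:cond_from_hardy_point_spec} is exactly the optimal Hardy weight produced by Theorem~\ref{thm:hardy_ineq_aq}, and then to chain the Hardy inequality with the subordination criteria of Theorems~\ref{thm:cond_empty_point_spec_complex} and~\ref{thm:cond_empty_discr_spec_complex}. Accordingly, I would set $W := \diag(w_1, w_2, \dots)$ with
\[
 w_n := 2 - \left(1 - \frac{1}{n}\right)^{q} - \left(1 + \frac{1}{n}\right)^{q} - a\,\delta_{n,1}.
\]
Since $q \in (0, q_a]$ with $q_a$ as in~\eqref{eq:def_q_a}, Theorem~\ref{thm:hardy_ineq_aq} guarantees that $w$ is a nonnegative sequence and that the Hardy inequality $-\Delta_a \geq W$ holds in the sense of quadratic forms. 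Because both $|V|$ and $W$ are diagonal, the pointwise hypothesis $|v_n| \leq c\,w_n$ is equivalent to the form inequality $|V| \leq cW$.

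For the first assertion, I would combine these two facts: using $c>0$ together with $W \leq -\Delta_a$ and $|V| \leq cW$ gives
\[
 |V| \leq cW \leq c\,(-\Delta_a),
\]
which is precisely the subordination condition~\eqref{eq:cond_func_form} with the same constant $c < 1$. Theorem~\ref{thm:cond_empty_point_spec_complex} then yields $\sigma(J_a + V) = \sigma_\mathrm{c}(J_a + V) = \sigma(J_a)$, and since $\sigma(J_a) = [-2,2]$ for $a \in [0,1)$ by Theorem~\ref{thm:spectrum_J_a}, the claimed spectral stability follows.

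For the borderline case $c = 1$, the hypothesis reads $|v_n| \leq w_n$, hence $|V| \leq W \leq -\Delta_a$, which is exactly condition~\eqref{eq:cond_func_form_weaker}. Applying Theorem~\ref{thm:cond_empty_discr_spec_complex} then gives $\sigma_\mathrm{d}(J_a + V) = \emptyset$, completing the argument. There is no serious obstacle here: the analytic content is entirely carried by Theorem~\ref{thm:hardy_ineq_aq} and the two stability theorems. The only point deserving care is the identification of the bracket as the Hardy weight and, in particular, the nonnegativity of $w$ guaranteed by the restriction $q \leq q_a$, which is what legitimizes passing from the entrywise bound to the form comparison $|V| \leq cW$.
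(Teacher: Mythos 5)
Your proof is correct and follows exactly the route the paper intends: the paper states this theorem without a separate proof, noting only that it is obtained "by combining Theorems~\ref{thm:cond_empty_point_spec_complex} and~\ref{thm:cond_empty_discr_spec_complex} with the Hardy weights given in Theorem~\ref{thm:hardy_ineq_aq}," which is precisely your chain $|V|\leq cW\leq c(-\Delta_a)$ feeding into conditions~\eqref{eq:cond_func_form} and~\eqref{eq:cond_func_form_weaker}. No gaps.
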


\section{An open problem}\label{Sec.open}

As a final remark, we emphasize an interesting research problem.
It is related to the possibility of the extension
of the present method for discrete Schr\"odinger operators
on the half-line which are made critical by subtracting 
the optimal Hardy weight. 

In the continuous setting, the Schr{\" odinger} operator with the optimal Hardy potential
\[
  H:=-\frac{\dd^{2}}{\dd x^{2}}-\frac{1}{4x^{2}}
\]
acting in $L^{2}((0,\infty))$
and subject to Dirichlet boundary condition at $x=0$
is critical, which means that by adding 
an arbitrary negative potential to~$H$
makes the operator not positive any more.
Since the eigenvalue problem for~$H$ 
turns out to be related to
a particular Bessel differential equation,
spectral properties of $H$ can be deduced in terms of well known special functions, see for example~\cite{der-rich_ahp17}. Then one may consider $H$ as an unperturbed operator and investigate, for example, spectral enclosures for perturbations of $H$ by small complex potentials, asymptotic analysis of discrete eigenvalues of the perturbed operator under various settings, etc.

A discrete variant of the continuous problem above
aims to spectral properties of the Jacobi operator $J_{0}-W$ 
with the perturbing potential 
determined by the optimal discrete Hardy weight
\eqref{eq:optimal_hardy_weight}.
However, solutions of 
the difference equation of the eigenvalue problem for 
$J_{0}-W$ 
does not seem to be
expressible in terms of known special functions. 
Consequently, no explicit formula for the Green kernel of $J_{0}-W$ 
seems to be available. A more detailed spectral analysis of the critical operator $J_{0}-W$ would be of great interest. 
In adddition to the perturbation analysis mentioned above,
another interest comes form theory of orthogonal polynomials.
Indeed, the corresponding family of orthogonal polynomials 
determined by the recurrence 
\[
 p_{n+1}(x)=\left(x-\sqrt{1-\frac{1}{n}}-\sqrt{1+\frac{1}{n}}\right)p_{n}(x)-p_{n-1}(x), \quad n\in\N,
\]
with $p_{0}(x)=1$ and $p_{1}(x)=x-\sqrt{2}$, seems not to be analyzed yet.

\section*{Acknowledgement}
The research of D.K. and F.{\v S}. was partially supported 
by the EXPRO grant No.~20-17749X
of the Czech Science Foundation (GA\v{C}R).

\section*{Appendix: Illustrative and comparison plots}

Below in Figures~\ref{fig:spec_bound_a12}, \ref{fig:spec_bound_a2} and~\ref{fig:spec_bound_golden}, we provide illustrative plots of the optimal spectral enclosures of Theorem~\ref{thm:spectral_enclosure_ell1_pot} for $a\in\{1/2, 2, \ii(1+\sqrt{5})/2\}$. Namely, the plots show the boundary curves given by the equation 
\[
 \sqrt{|z^{2}-4|}=g_{a}(z)Q,
\]
for several values of the parameter $Q=\|v\|_{\ell^{1}}$. For $a=2$ and $a=\ii(1+\sqrt{5})/2$, the spectrum of the unperturbed operator~$J_{a}$ contains the extra eigenvalue $a+a^{-1}$, see Theorem~\ref{thm:spectrum_J_a}, that is designated by a red dot.

\begin{figure}[H]
    \centering
	\includegraphics[width=0.99\textwidth]{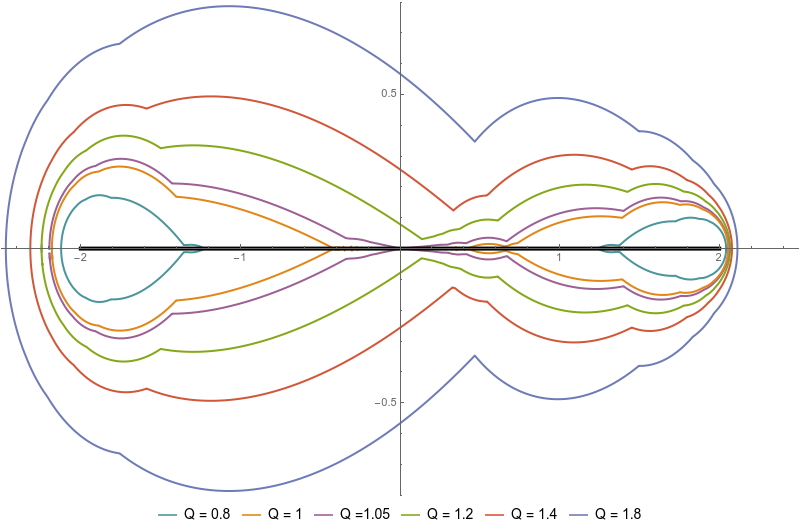}
    \caption{Optimal spectral enclosures~\eqref{eq:ell1_bound_optimal} for $a=1/2$ and several values of $Q=\|v\|_{\ell^{1}}$.}
    \label{fig:spec_bound_a12}
\end{figure}
\begin{figure}[H]
	\includegraphics[width=0.99\textwidth]{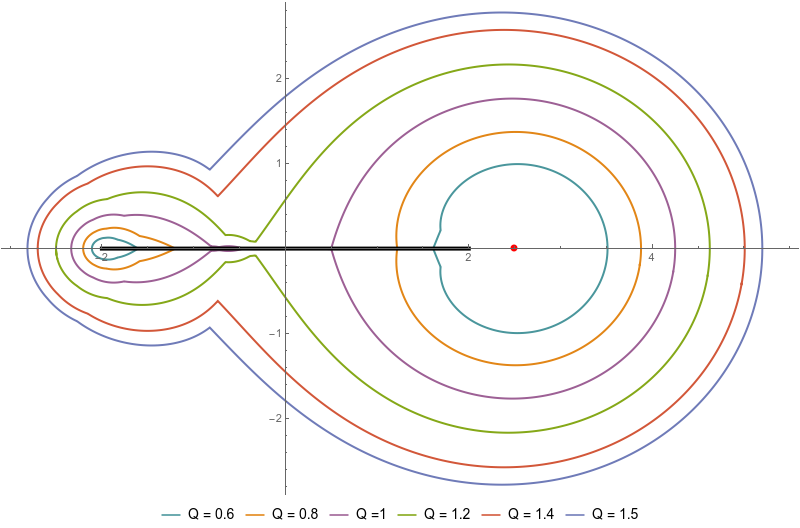}
	\caption{Optimal spectral enclosures~\eqref{eq:ell1_bound_optimal}  for $a=2$ and several values of $Q=\|v\|_{\ell^{1}}$. The red dot designates the sole eigenvalue of $J_{a}$.}
	\label{fig:spec_bound_a2}
\end{figure}
\begin{figure}[H]
	\includegraphics[width=0.99\textwidth]{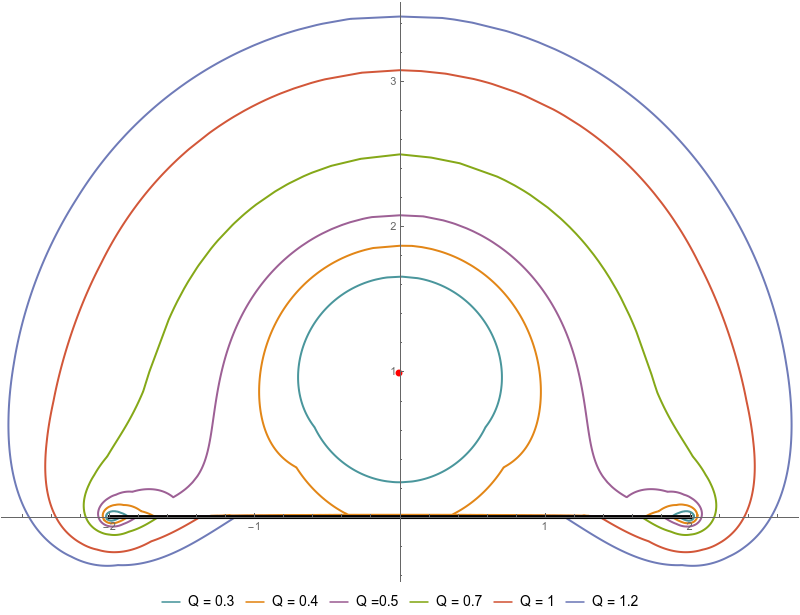}
	\caption{Optimal spectral enclosures~\eqref{eq:ell1_bound_optimal}  for $a=\ii(1+\sqrt{5})/2$ and several values of $Q=\|v\|_{\ell^{1}}$. The red dot designates the sole eigenvalue of $J_{a}$.}
	\label{fig:spec_bound_golden}
\end{figure}

\bibliographystyle{acm}
\bibliography{ref23}

\end{document}